\documentclass{amsart}
\usepackage{fullpage, graphics}

\usepackage{ifxetex,ifluatex}
\if\ifxetex T\else\ifluatex T\else F\fi\fi T%
  \usepackage{fontspec}
\else
  \usepackage[T1]{fontenc}
  \usepackage[utf8]{inputenc}
  \usepackage{lmodern}
\fi

\usepackage{hyperref,amsmath,amssymb,amsthm,colonequals}

\newcommand{\CC}{\mathbb{C}}
\newcommand{\FF}{\mathbb{F}}
\newcommand{\QQ}{\mathbb{Q}}
\newcommand{\ZZ}{\mathbb{Z}}

\numberwithin{equation}{section}
\newtheorem{theorem}[equation]{Theorem}
\newtheorem{lemma}[equation]{Lemma}

\DeclareMathOperator{\qual}{qual}

\newcommand{\SageMath}{\textsc{SageMath}}

\newcommand{\arXiv}[3]{\href{https://arxiv.org/abs/#1}{arXiv:#1v#2} (#3)}

\newcommand{\mr}[1]{(\href{https://mathscinet.ams.org/mathscinet-getitem?mr=#1}{MR#1})}


\begin{document}

\title{Abelian varieties over $\FF_2$ of prescribed order}
\author{Kiran S. Kedlaya}
\address{Department of Mathematics, University of California San Diego, La Jolla, CA 92093, United States of America}
\email{kedlaya@ucsd.edu}
\urladdr{https://kskedlaya.org}
\date{August 5, 2022}
\thanks{Thanks to Francesc Fit\'e for discussions that led to this work as well as that of \cite{howe-kedlaya}, and to the anonymous referee for valuable feedback on the readability of the \SageMath{} code.
Financial support was provided by NSF (grants DMS-1802161, DMS-2053473) and UC San Diego (Warschawski Professorship).}

\begin{abstract}
We prove that for every positive integer $m$, there exist infinitely many simple abelian varieties over $\FF_2$ of order $m$.
The method is constructive, building on the work of Madan--Pal in the case $m=1$ to produce an explicit sequence of Weil polynomials giving rise to abelian varieties over $\FF_2$ of order $m$.
This sequence itself depends on the choice of a suitable generalized binary representation of $m$; by making careful choices of this representation,
we can ensure that the the resulting sequence of polynomials have 2-adic Newton polygons which guarantee the existence of suitable irreducible factors.
\end{abstract}

\maketitle

\section{Introduction}

How can a given positive integer $m$ occur as the order of the group of rational points of an abelian variety $A$ over $\FF_q$ (or for short, the \emph{order of $A$})? While this is nominally a question in arithmetic algebraic geometry, it immediately translates into a pure matter of algebraic number theory. To wit, 
Weil's theorems on the zeta function of $A$ (e.g., see \cite{Milne15}) imply that
$\#A(\FF_q) = P(1)$ where $P(x)$ is the characteristic polynomial of Frobenius on $A$. The polynomial $P(x)$ is monic of degree $2g$ where $g = \dim(A)$ and its complex roots can be labeled $\alpha_1,\dots,\alpha_{2g}$ so that
\[
|\alpha_i| = \sqrt{q}, \qquad \alpha_{g+i} = \overline{\alpha}_i \quad (i=1,\dots,g).
\]
Moreover, the Honda--Tate theorem asserts that any polynomial $P(x)$ satisfying these conditions (plus a mild additional hypothesis, which is automatic if $q = p$ is prime) occurs for some abelian variety.
Understanding our original question thus becomes a matter of studying the space of \emph{Weil polynomials}; some important foundational work on this issue was done by DiPippo--Howe \cite{dipippo-howe}.

While it may seem at this point that our original question is mostly resolved by prior work, 
it should be emphasized that we did \emph{not} ask about a specific value of $g$, and this has a profound effect on the nature of the question. For example,
we may read off from Weil's results the bounds
\[
(\sqrt{q}-1)^{2g} \leq \#A(\FF_q) \leq (\sqrt{q}+1)^{2g};
\]
if we distinguish these intervals based on $g$, then as $q$ increases they become more and more separated, to the extend that $\#A(\FF_q)$ eventually determines $g$ uniquely.
By contrast, if we fix $q$, then as $g$ increases the intervals eventually start to overlap,
so specifying $\#A(\FF_q)$ does not fix $g$ at all.

Keeping this in mind, let us now narrow our original question and ask: for a given prime power $q$, which integers occur \emph{at least once} as the order of an abelian variety $A$ over $\FF_q$?
Howe--Kedlaya \cite{howe-kedlaya} showed that \emph{every} positive integer occurs as the order of an abelian variety over $\FF_2$, which can further be taken to be ordinary.
Building on this, van Bommel--Costa--Li--Poonen--Smith \cite{vanbommel-etc} showed that for any fixed $q$, every \emph{sufficiently large} positive integer occurs as the order of an abelian variety over $\FF_q$.
This abelian variety can be further taken to be ordinary, geometrically simple, and/or principally polarizable, and for each combination of conditions one can in principle establish an effective ``sufficiently large''  cutoff;
for example, for $q > 4$, every integer $m \geq q^{3\sqrt{q} \log q}$ occurs as the order of an ordinary abelian variety over $\FF_q$ \cite[Theorem~1.13(b)]{vanbommel-etc}, and the lower bound on $m$ is best possible up to replacing $3$ with a smaller constant \cite[Remark~1.15]{vanbommel-etc}.

Another natural question to ask is, for a fixed $\FF_q$, \emph{how often} a given order can occur. For $q > 2$, a result of Kadets \cite{kadets} implies that for all but finitely many simple abelian varieties $A$ over $\FF_q$,
\[
\#A(\FF_q) \geq 1.359^{\dim(A)};
\]
in particular, there are only finitely many simple abelian varieties over $\FF_q$ of any given order (regardless of dimension). It is natural to try to count these, but we do not address this here.

Instead, we focus on the case $q=2$ and prove the following theorem.
\begin{theorem} \label{thm:AV by order}
For every positive integer $m$, there exist \emph{infinitely many} simple abelian varieties over $\FF_2$ (of various dimensions) of order $m$.
\end{theorem}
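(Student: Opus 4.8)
The plan is to translate the statement into a problem about explicit integer polynomials and then control those polynomials at the $2$-adic place. By Weil's theorems and Honda--Tate, it suffices to exhibit, for each $m$ and for infinitely many $g$, a monic irreducible $2$-Weil polynomial $h(x) \in \ZZ[x]$ of degree $2g$ with $h(1) = m$ whose attached Weil number has trivial local invariants (Honda--Tate multiplicity $e = 1$); such an $h$ is the characteristic polynomial of a \emph{simple} abelian variety $A$ over $\FF_2$, and $h(1) = \#A(\FF_2) = m$. I would encode the archimedean Weil condition $|\alpha_i| = \sqrt 2$ through the trace polynomial: writing $h(x) = \prod_{i=1}^g (x^2 - \beta_i x + 2) = x^g f(x + 2/x)$ with $f(y) = \prod_{i=1}^g (y - \beta_i)$, the polynomial $h$ is a $2$-Weil polynomial exactly when $f \in \ZZ[y]$ is monic with all roots in the real interval $[-2\sqrt 2, 2\sqrt 2]$, and in this language $h(1) = f(3) = m$. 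The problem thus becomes: for each $m$, produce infinitely many monic $f \in \ZZ[y]$ with all roots in the Weil interval, with $f(3) = m$, and with the lift $h(x) = x^g f(x + 2/x)$ irreducible of trivial local invariants.

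First I would dispose of $m = 1$ by importing the Madan--Pal family, which already supplies an infinite sequence of trace polynomials with $f(3) = 1$ and roots confined to $[-2\sqrt 2, 2\sqrt 2]$; this provides both building blocks and a recursive skeleton. To reach a general $m$, I would expand $m$ in a generalized binary representation $m = \sum_j \epsilon_j 2^{a_j}$ with $\epsilon_j \in \{-1, 0, 1\}$ and use the digits to splice or perturb the base polynomials into an explicit sequence $(f_n)$ with $f_n(3) = m$, arranging that each perturbation displaces the roots only within the Weil interval. Binary bookkeeping is the natural device here because $3 = 1 + 2$, so a $2$-adic expansion of the identity $f_n(3) = m$ ties the digits of $m$ to the $2$-adic valuations of the coefficients of $f_n$ and of its lift $h_n$ --- which are precisely the data read off by the Newton polygon in the next step.

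Next I would extract irreducibility and simplicity from the $2$-adic Newton polygon of $h_n(x) = \sum_{i=0}^{2g} c_i x^i$. The functional equation $c_{2g-i} = 2^{g-i} c_i$ makes this polygon symmetric, running from $(0, g)$ to $(2g, 0)$, and I would arrange, by the careful choice of representation, that it has exactly two segments meeting at $(g, 1)$ --- equivalently, that $v_2(c_g) = 1$ while all other coefficients lie on or above the resulting polygon (possible once $g \geq 3$). Its slopes $1/g$ and $(g-1)/g$ then have denominator $g$, so each segment forces a single irreducible factor of $h_n$ over $\QQ_2$ of degree $g$. Since complex conjugation, which realizes $\alpha \leftrightarrow 2/\alpha$, is a field automorphism sending the slope-$1/g$ roots to the slope-$(g-1)/g$ roots, all roots lie in one Galois orbit, so $h_n$ is irreducible over $\QQ$; and the same slope data makes every local invariant of the endomorphism algebra vanish (at each place above $2$ the local degree $g$ clears the denominator), so $e = 1$. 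Thus $h_n$ is the characteristic polynomial of a simple abelian variety over $\FF_2$ of dimension $g$ and order $h_n(1) = m$. As $n$ varies, $\deg h_n$ is unbounded, so these varieties have unbounded dimension and are pairwise non-isogenous, yielding infinitely many.

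I expect the main obstacle to be the simultaneous control of two conditions that live at different places. Keeping every $\beta_i$ inside the narrow interval $[-2\sqrt 2, 2\sqrt 2]$ is an archimedean, real-analytic constraint of the same flavour as Robinson's problem and the Schur--Siegel--Smyth trace problem, and it is fragile under any algebraic manipulation; forcing the $2$-adic Newton polygon into the prescribed two-segment shape is an orthogonal, $p$-adic constraint on the valuations of the coefficients. A construction that respects one of these need not respect the other, and the heart of the argument is to reconcile them: to find, for every $m$, a generalized binary representation whose induced sequence $(f_n)$ keeps all roots in the Weil interval \emph{and} yields lifts $h_n$ whose Newton polygons guarantee the required irreducible factors. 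It is precisely the freedom in choosing the representation that I would exploit to thread this needle uniformly in $m$.
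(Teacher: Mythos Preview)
Your broad strategy---archimedean control via Madan--Pal/Chebyshev-type constructions, $2$-adic Newton polygons for irreducibility, and generalized binary representations of $m$ mediating between the two---is exactly the paper's. But the step you flag as ``the main obstacle'' and leave open is precisely where all of the work lies, and your specific target differs from what the paper actually achieves.

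You assert without construction that one can force the Newton polygon of the Weil polynomial $h_n$ to have two segments meeting at $(g,1)$. The paper never does this. It computes Newton polygons of the \emph{trace-side} polynomial $P_n(x)$ (for $m$ even) or of its shift $P_n(x+1)$ (for $m$ odd), and even then must split into separate arguments for $v_2(m)$ odd, $v_2(m)=2$, $v_2(m)\geq 4$, and $m$ odd; the last case requires a new notion of \emph{compliant representation} $Q(z)\equiv (z-1)^{\deg Q}\pmod 2$ whose existence for every odd $m$ is established only via a computer search (Lemma~\ref{lem:compliant representation}). Crucially, the paper also does \emph{not} insist on full irreducibility. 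It exploits the second-order linear recurrence satisfied by the sequence $(P_n)$ to show that any root common to two distinct terms is an algebraic unit (Lemma~\ref{lem:repeated roots}); hence an irreducible factor of \emph{bounded codegree} already suffices (Lemma~\ref{lem:poly factor of bounded codegree}), and this is all the Newton polygon typically delivers. By demanding a single clean polygon shape for $h_n$ itself, your plan forgoes this flexibility and sets a target that the paper's own case analysis suggests is not uniformly attainable by any simple recipe. (Your irreducibility deduction from the two-segment polygon is essentially correct for $g\geq 3$, once one cleanly separates the algebraic involution $\alpha\mapsto 2/\alpha$ on the root set from complex conjugation as an automorphism of $\CC$; but that is not where the difficulty lies.)
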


One key motivation for Theorem~\ref{thm:AV by order} is the fact that it holds for $m=1$ by an old theorem of Madan--Pal \cite{madan-pal}. That result gives a complete classification (up to isogeny) of simple abelian varieties over $\FF_2$ of order $1$ using work of Robinson on algebraic integers with all conjugates in a short real interval \cite{robinson, robinson2}. The case $m=2$ resolves a question of Kadets \cite[\S 1]{kadets}; the general question was raised in \cite[\S 1]{howe-kedlaya}.

The first step towards Theorem~\ref{thm:AV by order} is to produce some sequences of Weil polynomials giving rise to abelian varieties over $\FF_2$ of order $m$ (without apparent common simple factors).
This builds upon the work of Madan--Pal, and also uses some careful choices of generalized binary representations of $m$ as in \cite{howe-kedlaya} and \cite{vanbommel-etc}, including the \emph{nonadjacent binary representations} of Reitwiesner \cite{reitwiesner}.
One convenient feature of the construction is that each sequence we produce satisfies a second-order linear recurrence (closely linked to the recurrence relation satisfied by Chebyshev polynomials); this implies that every irreducible factor shared by more than one term of the sequence
corresponds to an abelian variety of order 1 (Lemma~\ref{lem:poly factor of right norm}). Consequently, if infinitely many terms of our sequence have irreducible factors of bounded codegree, then all but finitely many of the corresponding simple abelian varieties have order $m$ (Lemma~\ref{lem:poly factor of bounded codegree}).
This observation by itself is enough to establish Theorem~\ref{thm:AV by order} for $m$ prime (Lemma~\ref{lem:polynomials with roots in interval prime}), and thus to answer the question of Kadets.

To finish the proof of Theorem~\ref{thm:AV by order}, we establish this irreducibility using 2-adic calculations (mostly Newton polygons). 
For $m$ even, we get by with a slight variant of the nonadjacent binary representation (Lemma~\ref{lem:polynomials with roots in interval even case});
for $m$ odd, we need a representation of a more restricted form, which we construct using a short computer calculation
(Lemma~\ref{lem:polynomials with roots in interval odd case}) in \SageMath{} \cite{sage}. We include the relevant \SageMath{} code as an appendix; it is also available as a Jupyter notebook from the author's web site.

We conclude this introduction with some discussion of related questions that we do not treat.
\begin{itemize}
\item
It is not clear whether our approach can be upgraded to ensure that the simple abelian varieties we obtain are ordinary, geometrically simple, or principally polarizable; the ordinary condition in particular is incompatible with our use of 2-adic methods. For order greater than 1, it is possible that a suitable adaptation of \cite[Construction 9.1]{vanbommel-etc} can be used for this purpose. Such a construction may also shed some light on the \emph{number} of isogeny classes of simple abelian varieties over $\FF_2$ of fixed dimension and order, which is known for dimensions up to 6 by the exhaustive tables in LMFDB; see \cite{lmfdb-article} for more on this data and its tabulation.

\item
For simple abelian varieties of order 1, one cannot hope to enforce the ordinary, geometrically simple, and principally polarizable conditions simultaneously because the Madan--Pal classification demonstrates a ``rigidity'' of these abelian varieties. In fact, it can be shown that \emph{no} simple abelian variety of order 1 is both ordinary and geometrically simple; see \cite{kedlaya-dnelly}.

\item
Marseglia--Springer \cite{marseglia-springer} consider the question of finding abelian varieties realizing specific \emph{groups} of rational points;
this is a problem of a somewhat different nature because the group of rational points of an abelian variety over a finite field is \emph{not} an isogeny invariant (whereas its order is).
Using the result of \cite{howe-kedlaya} and \cite{vanbommel-etc}, Marseglia--Springer show that every finite abelian group occurs as the group of rational points of some ordinary abelian variety over $\FF_2$, $\FF_3$, and $\FF_5$ (and a slightly weaker analogue over $\FF_4$). 
Using Theorem~\ref{thm:AV by order}, Marseglia--Springer show that any fixed finite abelian group occurs as the group of rational points of infinitely many pairwise coprime abelian varieties.

\item
In contrast with Theorem~\ref{thm:AV by order}, for any positive integer $m$ there are are only finitely many isomorphism classes of curves whose Jacobians have order $m$; this is even true if we vary over all finite fields (modulo the trivial exception of curves of genus 0 in the case $m=1$).
It would be interesting to identify these curves for some small values of $m$. For example, it is known from work of Madan--Queen \cite{madan-queen}, Stirpe \cite{stirpe}, Mercuri--Stirpe \cite{mercuri-stirpe}, and Shen--Shi \cite{shen-shi} that there are eight isomorphism classes of curves of positive genus whose Jacobians have order 1: one curve of genus 1 over each of $\FF_3$ and $\FF_4$, plus six more curves of genera at most 4 over $\FF_2$.

\end{itemize}

\section{Setup}

We first introduce the setup used by Madan--Pal to study abelian varieties over $\FF_2$ with small order, building on work of Robinson \cite{robinson}.
Throughout this paper, we consider the interval
\[
[a,b] \colonequals [3 -2\sqrt{2}, 3+2\sqrt{2}].
\]

\begin{lemma} \label{lem:AV from poly}
Let $P(x) \in \ZZ[x]$ be an irreducible monic polynomial
with all roots in $[a,b]$
and set
\[
Q(x) \colonequals (-1)^{\deg P(x)}P(3-x), \qquad R(x) \colonequals x^{\deg P(x)} Q(x+2x^{-1}).
\]
Then $R(x)$ occurs as the characteristic polynomial of Frobenius of some simple abelian variety $A$ over $\FF_2$ with $\#A(\FF_2) = (-1)^{\deg P(x)} P(0)$.
\end{lemma}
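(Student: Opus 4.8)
The plan is to follow the roots through the two substitutions, recognize $R(x)$ as a Weil polynomial for $q=2$, and then apply the Honda--Tate theorem. Write $d = \deg P(x)$ and $P(x) = \prod_{i=1}^d (x-\beta_i)$ with each $\beta_i \in [a,b]$. First I would record that $Q(x) = (-1)^d P(3-x) = \prod_{i=1}^d (x - \gamma_i)$ is monic of degree $d$ over $\ZZ$, irreducible (the substitution $x \mapsto 3-x$ is an automorphism of $\QQ[x]$), with roots $\gamma_i = 3 - \beta_i \in [-2\sqrt2, 2\sqrt2]$. Substituting $x \mapsto x + 2x^{-1}$ and clearing denominators gives
\[
R(x) = x^d Q(x + 2x^{-1}) = \prod_{i=1}^d (x^2 - \gamma_i x + 2),
\]
and writing $Q(y) = \sum_k c_k y^k$ exhibits $R(x) = \sum_k c_k x^{d-k}(x^2+2)^k \in \ZZ[x]$, monic of degree $2d$. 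The order then drops out at once: since the number of rational points is the characteristic polynomial evaluated at $1$,
\[
\#A(\FF_2) = R(1) = Q(3) = (-1)^d P(0).
\]

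Next I would read off the archimedean structure. Each factor $x^2 - \gamma_i x + 2$ has discriminant $\gamma_i^2 - 8 \le 0$ and constant term $2$, so its roots $\alpha_i, \overline{\alpha_i}$ form a complex-conjugate pair with $|\alpha_i| = \sqrt2$; hence $R(x)$ is a Weil polynomial for $q = 2$ with $g = d$. To pin down the associated abelian variety I would use the dichotomy forced by irreducibility of $Q$: since $x^2 - 8$ is the only monic irreducible polynomial having $\pm 2\sqrt2$ as a root, either $Q(x) = x^2 - 8$, or every $\gamma_i$ lies in the open interval $(-2\sqrt2, 2\sqrt2)$. In the second case every $\alpha_i$ is non-real, so for any root $\pi$ of $R$ the field $E = \QQ(\pi)$ is a CM field of degree $2d$ (its maximal totally real subfield is $\QQ(\pi + 2\pi^{-1})$, and complex conjugation acts by $\pi \mapsto 2\pi^{-1}$), whence $R = m_\pi$ is irreducible. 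In the first case $R = (x^2-2)^2$, with $\pi = \sqrt2$ real and $E = \QQ(\sqrt2)$ totally real.

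The substantive step --- and the main obstacle --- is to ensure that $R(x)$ is the characteristic polynomial of a \emph{simple} abelian variety \emph{exactly}, not merely a Weil polynomial sharing its roots. For this I would apply Honda--Tate to the Weil number $\pi$: it produces a simple abelian variety $A/\FF_2$ with characteristic polynomial $m_\pi^{e}$, where $e$ is the index of $\mathrm{End}^0(A)$ over its center $E$, governed by the local invariants. The decisive simplification is that $q = 2$ is prime, so Tate's formula makes every invariant at a finite place of $E$ over $2$ an integer, hence trivial in $\QQ/\ZZ$ --- this is precisely the ``mild additional hypothesis'' that is automatic for prime $q$. Consequently $\mathrm{End}^0(A)$ can ramify only at the real places of $E$: in the CM case there are none, so $e = 1$ and the characteristic polynomial is $m_\pi = R$; in the case $E = \QQ(\sqrt2)$ the two real places each contribute invariant $\tfrac12$, forcing $e = 2$ and characteristic polynomial $(x^2-2)^2 = R$. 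Matching the Honda--Tate multiplicity $e$ to the exponent with which $m_\pi$ occurs in $R$ is the only delicate point, and it comes out correctly exactly because $q$ is prime.
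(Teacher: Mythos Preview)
Your proof is correct and follows the same route as the paper: trace the roots through the substitutions $x \mapsto 3-x$ and $x \mapsto x + 2x^{-1}$, identify $R$ as a Weil polynomial for $q=2$, and invoke Honda--Tate, using that over a prime field the local invariants at places above $p$ are integers. You are in fact more careful than the paper on one point: the paper simply asserts that $R$ is irreducible and parenthetically sets aside ``the exceptional case $R(x) = x^2 - 2$'', whereas your dichotomy correctly isolates the boundary case $Q(x) = x^2 - 8$ (equivalently $P(x) = x^2 - 6x + 1$), in which $R(x) = (x^2 - 2)^2$ is \emph{not} irreducible, and your explicit check that the Honda--Tate multiplicity is $e=2$ confirms that the conclusion still holds there.
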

\begin{proof}
Put $m = (-1)^{\deg P(x)} P(0)$.
The conditions on $P$ imply that $Q(x) \in \ZZ[x]$ is a monic irreducible polynomial with all roots in the interval $[-2\sqrt{2}, 2\sqrt{2}]$ with $Q(3) = m$,
and then that $R(x) \in \ZZ[x]$ is a monic irreducible polynomial with all roots on the circle $|x| = \sqrt{2}$.
By the Honda--Tate theorem \cite{tate}, \cite{waterhouse-milne}, $R(x)$ occurs as the characteristic polynomial of Frobenius of some simple abelian variety $A$ over $\FF_2$ (there being no Brauer obstruction because we are working over a prime field and we avoid the exceptional case $R(x) = x^2 - 2$);
for any such $A$, we have $\#A(\FF_2) = R(1) = Q(3) = m$.
\end{proof}

For $n$ a positive integer, let $T_n(x) \in \ZZ[x]$ be the $n$-th Chebyshev polynomial of the first kind for the ``arithmetic'' normalization (i.e., the Dickson polynomials of the first kind with parameter $1$):
\[
T_n(2 \cos \theta) = 2 \cos n\theta.
\]
For $n \geq 0$, define the polynomial $f_n(x)$ of degree $2n$ by the formula
\[
f_n(x) \colonequals x^n T_n(x + x^{-1} - 4).
\]
Since $x \mapsto x + x^{-1} - 4$ maps $[a,b]$ two-to-one onto $[-2, 2]$, $f_n(x)$ has all roots in $[a,b]$.
In the ring
\begin{equation} \label{eq:solution ring}
R \colonequals \frac{\ZZ[x^{\pm 1}, y^{\pm 1},(x-1)^{-1}]}{(x + x^{-1} - 4 - y - y^{-1})}
\cong \ZZ[x^{\pm 1}, (x-1)^{-1}, \sqrt{x^2-6x+1}],
\end{equation}
we have
\begin{equation} \label{eq:fn equation}
f_n(x) = x^n T_n(y + y^{-1}) = x^n(y^n + y^{-n}).
\end{equation}

We finally introduce a key modification that will give rise to abelian varieties of prescribed orders greater than 1.
For $n,k \geq 0$, define the rational function
\[
g_{n,k}(x) \colonequals (x-1)^{-k} \sum_{j=0}^k \binom{k}{j} f_{n+j}(x),
\]
so that  $g_{n,0}(x) = f_n(x)$.  In the ring $R$, we have
\begin{equation}
g_{n,k}(x) = (xy)^{n} \left( \frac{xy+1}{x-1} \right)^k + (xy^{-1})^n \left( \frac{xy^{-1}+1}{x-1} \right)^k.
\label{eq:gnk algebraic representation}
\end{equation}
We will see later that $g_{n,k}(x)$ is a polynomial of degree $2n+k$ (Lemma~\ref{lem:gnk polynomial}) with constant term $(-2)^k$
\eqref{eq:gn mod x2 congruence} having all roots in $[a,b]$ (Lemma~\ref{lem:hn roots interval general}).

\section{Recurrence relations and algebraic corollaries}

We next introduce some recurrence relations satisfied by $f_n(x)$ and $g_{n,k}(x)$, and use these to derive some additional algebraic properties, notably that  $g_{n,k}(x)$ is indeed a polynomial (Lemma~\ref{lem:gnk polynomial}). Many spot verifications of these properties can also be found in the associated Jupyter notebook.

To begin with, recall that the Chebyshev polynomials are characterized by the recurrence relation and initial conditions:
\begin{equation} \label{eq:Tn recurrence relation}
T_n(x) - xT_{n-1}(x) + T_{n-2}(x) = 0, \quad T_0(x) = 2, \quad T_1(x) = x.
\end{equation}
This translates into a corresponding recurrence relation and initial conditions for $f_n(x)$:
\begin{equation} \label{eq:fn recurrence relation}
f_n(x) - (x^2 - 4x + 1)f_{n-1}(x) + x^2 f_{n-2}(x) = 0, \quad f_0(x) = 2, \quad f_1(x) = x^2 - 4x + 1.
\end{equation}
From \eqref{eq:fn recurrence relation}, it is easy to deduce by induction that 
\begin{align} 
\label{eq:fn mod x2 congruence}
f_n(x) &\equiv x^{2n} + 1 \pmod{x} \\
\label{eq:fn value at 1}
f_n(x) &\equiv (-1)^n 2 \pmod{x-1} \\
\label{eq:fn mod 8 congruence}
f_n(x) &\equiv x^{2n} + 4n(x^{2n-1} + x^{2n-3} + \cdots + x) + 1 \pmod{8}.
\end{align}
(In \eqref{eq:fn mod x2 congruence}, the term $x^{2n}$ is only relevant when $n=0$.)
The recurrence relation \eqref{eq:fn recurrence relation} for $f_n$ translates into the recurrence relation
\begin{equation} \label{eq:gn recurrence relation}
g_{n,k}(x) - (x^2 - 4x + 1)g_{n-1,k}(x) + x^2 g_{n-2,k}(x) = 0.
\end{equation}
We can also formulate recurrence relations for $g_{n,k}$ in which $k$ varies. To begin with, for $k \geq 1$,
\begin{equation} \label{eq:gn simple recursion in k}
(x-1) g_{n,k}(x) = g_{n,k-1}(x) + g_{n+1,k-1}(x).
\end{equation}
We can also avoid division by $x-1$ at the expense of lengthening the recurrence in the $k$-aspect.
\begin{lemma}
For $k \geq 2$,
\begin{gather}
\label{eq:gn recurrence relation by k}
g_{n,k}(x) - (x-3)g_{n,k-1}(x) + 2 g_{n,k-2}(x) = 0, \\
\label{eq:gn recurrence relation by k2}
g_{n,k}(x) + 4 g_{n,k-1}(x) + 4g_{n,k-2}(x) - x^2 g_{n-1,k}(x) = 0.
\end{gather}
\end{lemma}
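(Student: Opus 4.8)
The plan is to work entirely inside the ring $R$ of \eqref{eq:solution ring}, where by \eqref{eq:gnk algebraic representation} we have a clean closed form, and to reduce both recurrences to elementary symmetric-function identities; since the quantities involved are rational functions of $x$, verifying the identities in $R$ suffices. Write $u = xy$ and $v = xy^{-1}$, and set $\alpha = (xy+1)/(x-1)$ and $\beta = (xy^{-1}+1)/(x-1)$, so that \eqref{eq:gnk algebraic representation} reads $g_{n,k}(x) = u^n\alpha^k + v^n\beta^k$. The first thing I would record are the relevant symmetric functions. From the defining relation $y + y^{-1} = x + x^{-1} - 4$ of $R$ one gets $u + v = x(y+y^{-1}) = x^2 - 4x + 1$ and $uv = x^2$; feeding these into the definitions of $\alpha,\beta$ gives $\alpha + \beta = (u+v+2)/(x-1) = (x^2-4x+3)/(x-1) = x-3$ and $\alpha\beta = (uv+u+v+1)/(x-1)^2 = 2(x-1)^2/(x-1)^2 = 2$.

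For \eqref{eq:gn recurrence relation by k}, the point is that for fixed $n$ the sequence $k \mapsto g_{n,k} = u^n\alpha^k + v^n\beta^k$ is a fixed linear combination of the $k$-th powers of $\alpha$ and $\beta$, and hence satisfies the second-order linear recurrence whose characteristic polynomial is $(t-\alpha)(t-\beta) = t^2 - (x-3)t + 2$. This is exactly \eqref{eq:gn recurrence relation by k}, requiring nothing beyond the two symmetric functions just computed.

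The second recurrence \eqref{eq:gn recurrence relation by k2} is the step I expect to demand the most care, since it mixes the $n$- and $k$-shifts. Here I would first handle the reindexing: because $u^{n-1} = u^n/(xy)$ one has $x^2 u^{n-1} = v u^n$, and symmetrically $x^2 v^{n-1} = u v^n$, so that $x^2 g_{n-1,k} = vu^n\alpha^k + uv^n\beta^k$. On the other hand, collecting the first three terms and factoring gives $g_{n,k} + 4g_{n,k-1} + 4g_{n,k-2} = u^n\alpha^{k-2}(\alpha+2)^2 + v^n\beta^{k-2}(\beta+2)^2$. Subtracting, the desired identity \eqref{eq:gn recurrence relation by k2} becomes
\[
u^n\alpha^{k-2}\bigl[(\alpha+2)^2 - v\alpha^2\bigr] + v^n\beta^{k-2}\bigl[(\beta+2)^2 - u\beta^2\bigr] = 0,
\]
so it suffices to show each bracket vanishes.

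To finish I would verify $(\alpha+2)^2 = v\alpha^2$. Clearing the common denominator $(x-1)^2$ and using $\alpha = (u+1)/(x-1)$, this reduces to $(u+2x-1)^2 = v(u+1)^2$; expanding and substituting $uv = x^2$ together with $v = (x^2-4x+1) - u$ collapses the difference of the two sides to $u^2 - (x^2-4x+1)u + x^2$, which vanishes precisely because $u$ is a root of $t^2 - (u+v)t + uv$. The companion identity $(\beta+2)^2 = u\beta^2$ then follows from the symmetry $y \leftrightarrow y^{-1}$, which interchanges $(u,\alpha)$ with $(v,\beta)$. The only genuine obstacle is the bookkeeping in this last expansion; everything else is a direct consequence of the two symmetric-function identities.
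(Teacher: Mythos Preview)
Your argument is correct. Working in $R$ via the closed form \eqref{eq:gnk algebraic representation} is legitimate because the $g_{n,k}(x)$ lie in $\ZZ[x,(x-1)^{-1}]$, which embeds in $R$; the symmetric-function values $\alpha+\beta=x-3$, $\alpha\beta=2$ are computed correctly, and the reduction of $(u+2x-1)^2 - v(u+1)^2$ to the characteristic polynomial $u^2-(x^2-4x+1)u+x^2$ checks out once one uses $vu^2=x^2u$ and $v=(x^2-4x+1)-u$.

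Your route differs from the paper's. The paper does not touch the closed form in $R$ at all: it verifies both identities at $k=2$ by writing $g_{n,0},g_{n,1},g_{n,2}$ as explicit $\ZZ[x,(x-1)^{-1}]$-combinations of $f_n,\dots,f_{n+2}$ and collapsing everything via the $f$-recurrence \eqref{eq:fn recurrence relation}, then inducts on $k$ using the one-step relation \eqref{eq:gn simple recursion in k}. That approach stays inside the polynomial world and never needs the auxiliary quantities $\alpha,\beta$, at the cost of a somewhat opaque base-case check. Your approach is more conceptual: \eqref{eq:gn recurrence relation by k} is literally the minimal polynomial of $\alpha,\beta$, and \eqref{eq:gn recurrence relation by k2} reduces to the single identity $(\alpha+2)^2=v\alpha^2$, which explains where the coefficients $4,4$ come from. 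The trade-off is that you must justify (as you do implicitly) that identities among these rational functions can be checked in $R$, and that the bracket-by-bracket vanishing really suffices rather than merely the sum; since you show each bracket is zero in $R$, that concern does not arise.
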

\begin{proof} 
The equalities can be seen to hold for $k=2$ by expanding $g_{n,k}(x)$ in terms of $f_{n+j}(x)$ and applying \eqref{eq:fn recurrence relation},
and then for $k > 2$ by induction using \eqref{eq:gn simple recursion in k}.
\end{proof}

We are now ready to establish that $g_{n,k}(x)$ is in fact a polynomial.
\begin{lemma} \label{lem:gnk polynomial}
For $n,k \geq 0$, $g_{n,k}(x)$ is a polynomial of degree $2n+k$.
\end{lemma}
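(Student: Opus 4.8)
The plan is to induct on $k$ for each fixed $n \geq 0$, exploiting the fact that the three-term recurrence \eqref{eq:gn recurrence relation by k} has \emph{polynomial} coefficients and, crucially, involves no division by $x-1$. This lets the ``polynomial'' property propagate from lower values of $k$ to higher ones for free, so that the only genuine work is to verify the two base cases and to keep track of degrees. It is convenient to prove the slightly stronger statement that each $g_{n,k}(x)$ is monic, since that automatically prevents any loss of degree in the recurrence.

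For the base cases, $g_{n,0}(x) = f_n(x) = x^n T_n(x+x^{-1}-4)$ is manifestly a polynomial, and since $T_n$ is monic of degree $n$ its leading term is $x^n \cdot x^n = x^{2n}$; thus $g_{n,0}$ is monic of degree $2n$. For $k=1$ I would invoke \eqref{eq:gn simple recursion in k}, namely $(x-1)g_{n,1}(x) = f_n(x) + f_{n+1}(x)$. By \eqref{eq:fn value at 1} we have $f_n(1) = (-1)^n 2$, so $f_n(1) + f_{n+1}(1) = 0$, i.e.\ $x-1$ divides $f_n + f_{n+1}$ in $\ZZ[x]$; hence $g_{n,1}$ is a polynomial. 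Since $f_n + f_{n+1}$ is monic of degree $2n+2$ (its leading term coming from $f_{n+1}$), dividing by $x-1$ shows $g_{n,1}$ is monic of degree $2n+1$.

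For the inductive step, fix $k \geq 2$ and assume $g_{n,k-1}$ and $g_{n,k-2}$ are monic polynomials of degrees $2n+k-1$ and $2n+k-2$ respectively. Rewriting \eqref{eq:gn recurrence relation by k} as $g_{n,k}(x) = (x-3)g_{n,k-1}(x) - 2g_{n,k-2}(x)$ exhibits $g_{n,k}$ as a polynomial. For the degree, $(x-3)g_{n,k-1}$ is monic of degree $2n+k$ while $2g_{n,k-2}$ has strictly smaller degree $2n+k-2$, so there is no cancellation of the top term, and $g_{n,k}$ is monic of degree $2n+k$, completing the induction.

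The only step carrying real content is the $k=1$ base case, where the divisibility by $x-1$ hinges on the congruence \eqref{eq:fn value at 1}; everything else is formal propagation through a division-free recurrence together with routine degree bookkeeping. I would also note a slicker but less elementary alternative: the representation \eqref{eq:gnk algebraic representation} writes $g_{n,k} = \alpha^n u^k + \beta^n v^k$, where $\alpha = xy$ and $\beta = xy^{-1}$ are roots of $t^2 - (x^2-4x+1)t + x^2$, while $u = (xy+1)/(x-1)$ and $v = (xy^{-1}+1)/(x-1)$ satisfy $u+v = x-3$ and $uv = 2$ (using $y + y^{-1} = x + x^{-1} - 4$), hence are roots of the monic $t^2 - (x-3)t + 2$. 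Thus $g_{n,k}$ is integral over $\ZZ[x]$ and, being invariant under $y \mapsto y^{-1}$, lies in $\QQ(x)$, so it lies in $\ZZ[x]$ because $\ZZ[x]$ is integrally closed. I nonetheless prefer the recurrence argument, as it is entirely self-contained given the identities already in hand.
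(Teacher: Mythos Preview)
Your argument is correct and follows essentially the same route as the paper: verify $k=0,1$ directly (the latter via \eqref{eq:fn value at 1} and \eqref{eq:gn simple recursion in k}), then propagate through the division-free recurrence \eqref{eq:gn recurrence relation by k}; the paper reads off the degree from the definition rather than tracking monicity through the recurrence, but this is cosmetic. One tiny slip: $T_0 = 2$ is not monic, so $g_{0,0} = f_0 = 2$ is not monic either---your ``slightly stronger'' statement fails at this single point, though the induction is unaffected since the step only needs $\deg g_{n,k-2} < 2n+k$.
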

\begin{proof}
We have $g_{n,k}(x) \in \ZZ[x]$ for $k=0$ because $g_{n,0}(x) = f_n(x)$, and for $k=1$ by \eqref{eq:fn value at 1} and \eqref{eq:gn simple recursion in k}.
By \eqref{eq:gn recurrence relation by k} we deduce that $g_{n,k}(x) \in \ZZ[x]$ for $n,k \geq 0$.
The degree assertion then follows from the fact that $\deg f_{n+j}(x) = 2n+2j$.
\end{proof}

Using \eqref{eq:gn simple recursion in k}, we may formally promote \eqref{eq:fn mod x2 congruence}: for $n > 0$,
\begin{equation} \label{eq:gn mod x2 congruence}
g_{n,k}(x) \equiv (-2)^k \pmod{x}.
\end{equation}
We may also promote \eqref{eq:fn value at 1} as follows.
\begin{lemma} \label{lem:2-adic congruence at shift}
For $n, k \geq 0$,
\begin{equation} \label{eq:value at 1}
g_{n,k}(x) \equiv (-1)^{n-k} ((1+i)^k + (1-i)^k) \pmod{x-1}.
\end{equation}
\end{lemma}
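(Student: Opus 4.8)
The plan is to reduce the claim to a linear recurrence in $k$ satisfied by the integer sequence $c_{n,k} := g_{n,k}(1)$, and then to verify that the proposed closed form solves that recurrence with the correct initial data. Since $g_{n,k}(x) \in \ZZ[x]$ by Lemma~\ref{lem:gnk polynomial}, the congruence \eqref{eq:value at 1} is literally the assertion $g_{n,k}(1) = (-1)^{n-k}((1+i)^k + (1-i)^k)$, so it suffices to track the single value $c_{n,k}$.

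First I would specialize the $k$-recurrence \eqref{eq:gn recurrence relation by k} at $x=1$. Since $(x-3)$ evaluates to $-2$ there, this yields, for $k \geq 2$,
\[
c_{n,k} + 2 c_{n,k-1} + 2 c_{n,k-2} = 0.
\]
The characteristic polynomial $t^2 + 2t + 2$ has roots $-1 \pm i = -(1 \mp i)$. Writing $s = 1+i$ and $t = 1-i$ (the roots of $z^2 - 2z + 2$, so that $s+t = 2$ and $st = 2$), I would check directly that $a_k := s^k + t^k$ obeys $a_k = 2a_{k-1} - 2a_{k-2}$, and hence that $(-1)^{n-k} a_k$ satisfies the displayed recurrence. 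This reduces everything to matching two initial values.

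Next I would compute the base cases. For $k=0$ one has $c_{n,0} = f_n(1) = 2(-1)^n$ directly from \eqref{eq:fn value at 1}, matching $(-1)^n(1+1)$. The case $k=1$ is the crux. Here \eqref{eq:gn simple recursion in k} reads $(x-1)g_{n,1}(x) = f_n(x) + f_{n+1}(x)$, whose right side vanishes at $x=1$ by \eqref{eq:fn value at 1}, so plain evaluation is uninformative. I would instead differentiate and set $x=1$ to obtain $c_{n,1} = g_{n,1}(1) = f_n'(1) + f_{n+1}'(1)$. The one remaining input is $f_n'(1) = 2n(-1)^n$, which I would obtain either from the explicit expression $f_n(x) = x^n T_n(x+x^{-1}-4)$ (the inner factor $x+x^{-1}-4$ has derivative $1 - x^{-2}$ vanishing at $x=1$, leaving $n\,T_n(-2) = 2n(-1)^n$), or, more self-containedly, by differentiating \eqref{eq:fn recurrence relation} at $x=1$ to get $f_n'(1) + 2f_{n-1}'(1) + f_{n-2}'(1) = 0$ and solving with $f_0'(1) = 0$, $f_1'(1) = -2$. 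Either way $c_{n,1} = 2n(-1)^n - 2(n+1)(-1)^n = -2(-1)^n$, matching $(-1)^{n-1}((1+i)+(1-i))$.

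The main obstacle is precisely this $k=1$ computation, since the simple recursion \eqref{eq:gn simple recursion in k} degenerates at $x=1$; passing to the derivative and identifying $f_n'(1)$ is the only non-formal step. An alternative that sidesteps the derivative is to read the limit $x \to 1$ directly off the algebraic representation \eqref{eq:gnk algebraic representation}: along either branch of $\sqrt{x^2-6x+1}$ one has $y \to -1$ with $xy+1$ and $x-1$ vanishing at the same rate, so the ratios $\tfrac{xy^{\pm1}+1}{x-1}$ tend to $-(1 \mp i)$, giving $(-1)^n((-(1-i))^k + (-(1+i))^k) = (-1)^{n-k}((1+i)^k + (1-i)^k)$; but this requires a careful local analysis of the branch near the coincident root $y=-1$, which is why I expect the recurrence route to be cleaner.
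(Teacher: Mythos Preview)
Your proof is correct and follows essentially the same outline as the paper: both reduce to the cases $k=0,1$ via the $k$-recurrence \eqref{eq:gn recurrence relation by k} specialized at $x=1$, and both read off $k=0$ from \eqref{eq:fn value at 1}. The only difference is in the $k=1$ step: the paper sidesteps derivatives entirely by computing $g_{0,1}(1)$ and $g_{1,1}(1)$ directly from the explicit polynomials $f_0,f_1,f_2$ and then propagating to all $n$ via the $n$-recurrence \eqref{eq:gn recurrence relation} evaluated at $x=1$, which is a touch quicker than your route through $f_n'(1)$.
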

\begin{proof}
For $k=0$ this is a restatement of \eqref{eq:fn value at 1}.
For $k=1$, we may check the claim for $n=0,1$ from the values
\[
f_0(x) = 2, \quad f_1(x) = x^2 - 4x + 1, \quad f_2(x) = x^4 - 8x^3 + 16x^2 - 8x + 1
\]
and then for general $n$ by \eqref{eq:gn recurrence relation}.
We may then extend to general $k$ using \eqref{eq:gn recurrence relation by k}.
\end{proof}

We next consider analogues of \eqref{eq:fn mod 8 congruence} for $g_{n,k}$ for $k>0$. We start with a mod 2 congruence: from \eqref{eq:fn mod 8 congruence}
and \eqref{eq:gn simple recursion in k},
\begin{equation}
\label{eq:gnk mod 2 leading term}
g_{n,k}(x) \equiv x^{2n} (x+1)^k + 2^k \pmod{2}.
\end{equation}
We can also establish congruences modulo a higher power of 2 provided that we ignore some leading coefficients.

\begin{lemma}
For $n \geq 0$,
\begin{equation} \label{eq:gnk mod 2 congruence}
g_{n,k}(x) \equiv 0 \pmod{(x^{2n}, 2^{k})}.
\end{equation}
\end{lemma}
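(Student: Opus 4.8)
The plan is to prove the congruence \eqref{eq:gnk mod 2 congruence} by a double induction, with the three–term recurrence \eqref{eq:gn recurrence relation by k2} as the engine. First I would record what the statement asserts concretely: to say $g_{n,k}(x)\equiv 0\pmod{(x^{2n},2^k)}$ is to say that $g_{n,k}(x)$ lies in the ideal $(x^{2n},2^k)$ of $\ZZ[x]$, i.e.\ that the coefficients of $x^0,x^1,\dots,x^{2n-1}$ in $g_{n,k}(x)$ are all divisible by $2^k$. I would induct on $n$, and for each fixed $n$ induct on $k$, throughout working with ideal containments so that multiplying by a ring element automatically preserves membership.

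The base cases are immediate. For $n=0$ the ideal $(x^{0},2^k)=(1)$ is the unit ideal, so nothing is to be proved; this also settles the outer base case. For fixed $n\ge 1$ and $k=0$ the ideal is again $(x^{2n},1)=(1)$, while for $k=1$ the congruence \eqref{eq:gnk mod 2 leading term} gives $g_{n,1}(x)\equiv x^{2n}(x+1)\pmod 2$, all of whose terms have degree $\ge 2n$; hence the coefficients of $x^0,\dots,x^{2n-1}$ are even, as required.

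For the inductive step I would take $n\ge 1$ and $k\ge 2$ and assume the statement known for $(n-1,k)$ (outer hypothesis) and for $(n,k-1)$ and $(n,k-2)$ (inner hypothesis). Rewriting \eqref{eq:gn recurrence relation by k2} as
\[
g_{n,k}(x)=x^2 g_{n-1,k}(x)-4g_{n,k-1}(x)-4g_{n,k-2}(x),
\]
I would check that each term on the right lies in $(x^{2n},2^k)$. The outer hypothesis puts $g_{n-1,k}(x)\in(x^{2n-2},2^k)$, so $x^2 g_{n-1,k}(x)\in(x^{2n},2^k)$. The inner hypothesis puts $g_{n,k-1}(x)\in(x^{2n},2^{k-1})$ and $g_{n,k-2}(x)\in(x^{2n},2^{k-2})$; multiplying by $4=2^2$ lands $4g_{n,k-1}(x)$ in $(x^{2n},2^{k+1})$ and $4g_{n,k-2}(x)$ in $(x^{2n},2^{k})$. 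Summing the three memberships gives $g_{n,k}(x)\in(x^{2n},2^k)$, completing the induction.

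The one delicate point—and the reason the argument must reach for \eqref{eq:gn recurrence relation by k2} rather than the shorter relations—is the $2$-adic accounting. Neither \eqref{eq:gn simple recursion in k} nor \eqref{eq:gn recurrence relation by k} suffices on its own: substituting the inductive hypotheses into either yields only membership in $(x^{2n},2^{k-1})$, one factor of $2$ short of the goal. In \eqref{eq:gn recurrence relation by k2} the situation is better because the terms $4g_{n,k-1}(x)$ and $4g_{n,k-2}(x)$ carry an explicit factor $4=2^2$, while $x^2 g_{n-1,k}(x)$ inherits $2^k$-divisibility at full strength from dimension $n-1$; this combination is exactly what recovers the missing power of $2$, and the rest is bookkeeping. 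Conceptually this reflects the representation \eqref{eq:gnk algebraic representation}: over $\ZZ_2[[x]]$ the factor $xy$ is divisible by $x^2$ while $(xy^{-1}+1)/(x-1)$ is divisible by $2$, so one summand of $g_{n,k}$ is divisible by $x^{2n}$ and the other by $2^k$, making the low-degree part divisible by $2^k$; the induction above is a way to see this without constructing the relevant $2$-adic power series.
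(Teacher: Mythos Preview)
Your proof is correct and follows exactly the same approach as the paper: the base cases $n=0$ and $k=0$ are vacuous, the case $k=1$ is handled via \eqref{eq:gnk mod 2 leading term}, and the inductive step uses the recurrence \eqref{eq:gn recurrence relation by k2}. The paper's proof is a one-sentence sketch of precisely this argument; you have simply spelled out the double induction and the ideal containments in full, together with a helpful remark on why the other recurrences would fall one power of $2$ short.
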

\begin{proof}
The claim holds for $n=0$ and $k=0$ vacuously, and for $k=1$ by \eqref{eq:gnk mod 2 leading term}.
We may then deduce the general case by \eqref{eq:gn recurrence relation by k2}.
\end{proof}

Finally, from \eqref{eq:fn mod 8 congruence} and \eqref{eq:gn simple recursion in k} we obtain some congruences modulo higher powers of 2 relative to $k$:
\begin{align} \label{eq:gn1 mod 4 congruence}
g_{n,1}(x) &\equiv \sum_{i=0}^{2n-1} (-1)^{\lfloor (i-1)/2 \rfloor}  2x^i \pmod{(x^{2n}, 8)} \\
 \label{eq:gnk mod 8 congruence}
g_{n,2}(x) &\equiv \sum_{i=0}^{n-1} 4x^{2i} \pmod{(x^{2n}, 8)}.
\end{align}

\section{Counting roots}

To count zeros of polynomials in the interval $[a,b]$, we use an approach based on \emph{winding numbers}.

\begin{lemma}\label{lem:hn roots interval general}
Let $a_0, \dots, a_k$ be a sequence of real numbers with $a_k = 1$, such that the polynomial $Q(z) \colonequals \sum_{i=0}^k a_i z^i$ has all of its complex roots inside the closed disc $|z| \leq \sqrt{2}$
(e.g., by condition \eqref{eq:weight condition} below).
Then for each $n \geq 0$, the roots of the polynomial
\[
P_n(x) \colonequals \sum_{i=0}^k a_i g_{n,i}(x)
\]
are all real and contained in $[a,b]$. If in fact $Q(z)$ has all of its complex roots inside the open disc $|z| < \sqrt{2}$,
then the roots of $P_n(x)$ are pairwise distinct.
\end{lemma}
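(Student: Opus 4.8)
The plan is to reduce everything to a single winding-number computation on the circle $|w| = \sqrt{2}$. The key observation is that the algebraic representation \eqref{eq:gnk algebraic representation} lets us write, for real $x \in [a,b]$, $P_n(x) = 2\,\mathrm{Re}(F(x))$ where $F(x) = u^n Q(w)$, with $u = xy$ and $w = (xy+1)/(x-1)$. Indeed, $g_{n,i}(x) = u^n w^i + v^n \tilde w^i$ with $v = xy^{-1}$ and $\tilde w = (xy^{-1}+1)/(x-1)$, and for $x \in [a,b]$ the relation $y + y^{-1} = x + x^{-1} - 4 \in [-2,2]$ forces $|y| = 1$, hence $v = \bar u$ and $\tilde w = \bar w$; summing against the $a_i$ turns the two pieces into $u^n Q(w)$ and its conjugate. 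First I would record the two identities $w + \tilde w = x - 3$ and $w\tilde w = 2$ (obtained from $u + v = x^2 - 4x + 1$ and $uv = x^2$), which show that $|w|^2 = w\bar w = 2$: as $x$ runs over $[a,b]$, the point $w = \sqrt{2}\,e^{i\beta}$ traverses the upper semicircle once, with $x = 2\,\mathrm{Re}(w) + 3$. A short computation using $\tilde w = 2/w$ then gives the clean formula $u = (w+1)^2$, so that $F = (w+1)^{2n}Q(w)$.

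Writing $F = |F|\,e^{i\psi}$, the zeros of $P_n$ in the open interval $(a,b)$ are exactly the points where $\psi \equiv \pi/2 \pmod{\pi}$ and $F \neq 0$. So the heart of the matter is to show that $\psi = \arg F$ increases monotonically by exactly $\pi(2n+k)$ as $\beta$ runs from $0$ to $\pi$. For the total change, the factor $(w+1)^{2n}$ contributes $2n\pi$, since $\arg(w+1)$ runs monotonically from $0$ to $\pi$ along the arc; the factor $Q(w)$ contributes $\pi k$ by a conjugate-symmetry argument (the change of $\arg Q$ over the upper arc equals that over the lower arc, while over the full circle it equals $2\pi k$ because all $k$ roots lie inside $|w| < \sqrt{2}$).

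The hard part, and the place where the hypothesis on $Q$ enters, is the monotonicity. I would prove it by differentiating: with $w = \sqrt{2}\,e^{i\beta}$ one has $\frac{d}{d\beta}\arg F = \mathrm{Re}\Big(w\big(\tfrac{2n}{w+1} + \sum_j \tfrac{1}{w - r_j}\big)\Big)$, where the $r_j$ are the roots of $Q$. On $|w| = \sqrt{2}$ each summand simplifies to $\mathrm{Re}\frac{w}{w - r} = \frac{2 - \mathrm{Re}(w\bar r)}{|w - r|^2}$, and since $\mathrm{Re}(w\bar r) \le |w|\,|r| = \sqrt{2}\,|r| \le 2$, every summand is $\ge 0$; it is $> 0$ when $|r| < \sqrt{2}$ (and the $w+1$ term, having $r = -1$, is always $> 0$). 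Thus in the open-disc case $\psi$ is strictly increasing from $\psi(0) = 0$ (here $F(0) = b^n Q(\sqrt{2})$ with $Q(\sqrt{2}) > 0$) to $\psi(\pi) = \pi(2n+k)$, so $\cos\psi$ has exactly $2n+k$ zeros in $(0,\pi)$ and none at the endpoints, giving $2n+k$ distinct real roots of $P_n$ in $(a,b)$; as $\deg P_n = 2n+k$ by Lemma~\ref{lem:gnk polynomial}, these are all the roots and they are distinct, which is the second assertion.

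For the first assertion (closed disc, roots not necessarily distinct), I would avoid tracking roots on the circle directly and argue by continuity: replace each root $r_j$ by $t r_j$ with $t \in (0,1)$ to obtain a monic $Q_t$ with real coefficients and all roots in the open disc, apply the open-disc case to conclude that the associated $P_n^{(t)}$ has all $2n+k$ roots in $(a,b)$, and let $t \to 1^-$; since the coefficients of $Q_t$ converge to those of $Q$, the roots of $P_n^{(t)}$ converge to those of $P_n$, which therefore all lie in the closed interval $[a,b]$. I expect the monotonicity computation to be the main obstacle to pin down exactly, since it is what converts the bound $|r_j| \le \sqrt{2}$ into control of the winding number; once that sign is established, the root count is forced by the degree.
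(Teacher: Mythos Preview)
Your proof is correct and follows the same winding-number strategy as the paper: both arguments write $P_n(x)$ as twice the real part of a complex function along the circle $|w|=\sqrt 2$ (your $w$ is the paper's $s$), compute the total change in argument using the argument principle applied to $Q$, and handle the closed-disc case by continuity. The overall architecture is the same.

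That said, your execution differs in two interesting ways. First, you derive the identity $xy=(w+1)^2$, which collapses the expression to $F=(w+1)^{2n}Q(w)$; the paper instead keeps the factors $y^n$ and $s^k$ separate and writes the argument as $n\arg y + k\arg s + \arg\bigl(\sum_i a_i s^{i-k}\bigr)$, using the argument principle to show the last piece has zero displacement. Your formulation is cleaner and makes the dependence on $Q$ more transparent. Second, you prove \emph{strict monotonicity} of $\psi=\arg F$ via the derivative computation $\mathrm{Re}\,\tfrac{w}{w-r}=\tfrac{2-\mathrm{Re}(w\bar r)}{|w-r|^2}\geq 0$; the paper does not establish monotonicity at all, but instead parametrizes over a double cover $\theta\in[-2\pi,2\pi]$ and uses only the intermediate value theorem to produce at least $4n+2k$ sign changes, which after halving matches the degree. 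In fact your monotonicity step, while correct and illuminating (it is exactly where the hypothesis $|r_j|\le\sqrt 2$ enters pointwise), is stronger than necessary: once you know $\psi(0)=0$ and $\psi(\pi)=(2n+k)\pi$, the intermediate value theorem already gives $2n+k$ distinct $\beta$-values where $\psi$ hits the odd multiples of $\pi/2$, and since $\beta\mapsto x$ is a bijection this suffices. So the paper's route is slightly shorter, while yours yields the extra information that the roots interlace in a controlled way.
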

\begin{proof}
By continuity (of the roots of a polynomial as a function of the coefficients), we may reduce to the case where $Q(z)$ has all of its complex roots in the open disc $|z| < \sqrt{2}$.
For $\theta \in [-2\pi, 2\pi]$, we define a parametric complex solution of the equation
\[
x + x^{-1} - 4 = y + y^{-1}
\]
by setting $y(\theta) \colonequals e^{2 \pi i \theta}$ and 
\[
x(\theta) \colonequals \cos \theta + 2 + \sqrt{\cos^2 \theta + 4 \cos \theta + 3},
\]
choosing the branch of the square root so that $x(\theta)$ varies continuously and
\[
x(-2\pi) = b, x(0) = a, x(2\pi) = b.
\]
Define the function
\[
s(\theta) \colonequals \begin{cases} \frac{x(\theta) y(\theta) + 1}{x(\theta)-1} & (\theta \neq \pm \pi) \\
-1-i & (\theta = -\pi) \\
-1+i & (\theta = \pi);
\end{cases}
\]
one may check using L'H\^opital's rule that this function is continuous. By writing
\begin{equation} \label{eq:norm of moebius}
\left| s(\theta) \right|^2 = \frac{(x(\theta)y(\theta)+1)(x(\theta)y(-\theta)+1)}{(x(\theta)- 1)^2}
=  \frac{x(\theta)^2 + x(\theta)(x(\theta) + x(\theta)^{-1} - 4) + 1}{x(\theta)^2 - 2x(\theta) + 1} = 2,
\end{equation}
we deduce that $s$ carries $[-2\pi, 2\pi]$ into the circle $|z| = \sqrt{2}$ (making one full counterclockwise circuit).

By \eqref{eq:gnk algebraic representation},
\begin{equation} \label{eq:hn real expression}
P_n(x(\theta)) = 2 x(\theta)^{n} \mathrm{Real} \left( y(\theta)^{n} \sum_{i=0}^k a_i s(\theta)^{i} \right).
\end{equation}
Since $x(\theta)$ is monotone, the zeros of $P_n(x)$ in the interval $[a,b]$ (counted without multiplicity) are in bijection with zeros of 
$P_n(x(\theta))$ in either of the intervals $[-2\pi, 0]$ or $[0, 2\pi]$.
We will estimate the number of zeros of $P_n(x(\theta))$
by computing the displacement of 
\[
\arg \left(y(\theta)^{n} \sum_{i=0}^k a_i s(\theta)^{i}
\right) = n \arg y(\theta) + k \arg s(\theta) + \arg
\left( \sum_{i=0}^k a_i s(\theta)^{i-k} \right)
\]
over the interval $[-2\pi, 2\pi]$ (choosing all of the arguments to vary continuously in $\theta$).

As $\theta$ varies from $-2\pi$ to $2\pi$, the displacement of $n \arg y(\theta) + k \arg s(\theta)$ equals $(4n+2k) \pi$. Meanwhile,
we may see that $\arg \left( \sum_{i=0}^k a_i s(\theta)^{i-k} \right)$ has displacement 0
by combining \eqref{eq:norm of moebius}, our condition on the roots of $Q(z)$, and the argument principle.

Since $\arg \left(y(\theta)^{n} \sum_{i=0}^k a_i s(\theta)^{i} \right)$ varies continuously from 0 to $(4n+2k) \pi$ as $\theta$ runs from $2\pi$ to $2\pi$, by the intermediate value theorem it evaluates to an odd multiple of $\pi$ at no fewer than $4n+2k$ distinct values in this range.
By \eqref{eq:hn real expression}, these values are zeros of
$P_n(x(\theta))$ in $[a,b]$, each counted at most twice. Since $P_n(x)$ is a polynomial of degree $2n+k$, we deduce that all of its zeros are pairwise distinct real numbers in $[a,b]$.
\end{proof}

Note that in Lemma~\ref{lem:hn roots interval general}, one way to enforce the condition on $Q(z)$ is to assume
\begin{equation} \label{eq:weight condition}
\sum_{i=0}^{k-1} |a_i| 2^{(i-k)/2} \leq 1,
\end{equation}
as then the triangle inequality implies that $|z^{-k} Q(z)| > 0$ for $|z| > \sqrt{2}$
(compare \cite[Lemma~2]{howe-kedlaya}).
This restricted setting will be enough to prove Theorem~\ref{thm:AV by order} for $m$ even (Lemma~\ref{lem:polynomials with roots in interval even case}),
but we will need to exercise more flexibility for $m$ odd (Lemma~\ref{lem:polynomials with roots in interval odd case}).

\section{Repeated zeros in a recurrent sequence}

Note that for any fixed sequence $\{a_i\}$, the sequence of polynomials $P_n(x)$ considered in Lemma~\ref{lem:hn roots interval general} satisfies the same second-order recurrence as the ones satisfied by $f_n(x)$ \eqref{eq:fn recurrence relation} and $g_{n,k}(x)$ \eqref{eq:gn recurrence relation}.
Using this, we can show that the polynomials $P_n(x)$ have very few common zeros.

\begin{lemma} \label{lem:repeated roots}
Let $\{P_n(x)\}_{n \geq 0}$
be a sequence of monic integer polynomials satisfying the recurrence relation
\begin{equation} \label{eq:P recurrence relation}
P_n(x) - (x^2 - 4x + 1)P_{n-1}(x) + x^2 P_{n-2}(x) = 0.
\end{equation}
Suppose that $\alpha \in \CC^\times$ is a root of both $P_n(x)$ and $P_{n'}(x)$ for some $n < n'$.
Then $\alpha$ is a unit in the ring of algebraic integers.
\end{lemma}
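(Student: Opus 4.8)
The plan is to evaluate the recurrence \eqref{eq:P recurrence relation} at $x=\alpha$ and study the resulting scalar sequence $u_k \colonequals P_k(\alpha)$, which satisfies the constant-coefficient recurrence $u_k = c\,u_{k-1} - d\,u_{k-2}$ with $c = \alpha^2 - 4\alpha + 1$ and $d = \alpha^2$. The structural fact driving everything is that the characteristic polynomial $t^2 - c\,t + d$ has roots $t_1,t_2$ with product $t_1 t_2 = d = \alpha^2$; since $\alpha \neq 0$, both $t_i$ are nonzero. The hypothesis says $u_n = u_{n'} = 0$ for some $n < n'$, and the goal is to leverage this to force $\alpha + \alpha^{-1}$ to be an algebraic integer.

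In the generic case where $t_1 \neq t_2$, write the general solution $u_k = A t_1^k + B t_2^k$. Because $u_n = 0$ with $t_2 \neq 0$ precludes exactly one of $A,B$ vanishing, the two conditions $A t_1^n + B t_2^n = 0$ and $A t_1^{n'} + B t_2^{n'} = 0$ split into two possibilities: either $A,B$ are both nonzero, or both are zero. In the first, eliminating $A/B$ gives $(t_1/t_2)^{n'-n} = 1$, so $\zeta \colonequals t_1/t_2$ is a root of unity. Combined with $t_1 t_2 = \alpha^2$, I can write $t_1 = \alpha\mu^{-1}$, $t_2 = \alpha\mu$ with $\mu$ a square root of $\zeta$, hence itself a root of unity. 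The trace relation $t_1 + t_2 = \alpha^2 - 4\alpha + 1$ then becomes $\alpha(\mu + \mu^{-1}) = \alpha^2 - 4\alpha + 1$, that is,
\[
\alpha + \alpha^{-1} = 4 + (\mu + \mu^{-1}).
\]
Since $\mu + \mu^{-1}$ is an algebraic integer and $\alpha$ is an algebraic integer (being a root of the monic $P_n \in \ZZ[x]$), the element $\alpha^{-1} = (\alpha + \alpha^{-1}) - \alpha$ is an algebraic integer as well, so $\alpha$ is a unit. The same conclusion can be packaged through powers of the companion matrix $M = \left(\begin{smallmatrix} c & -d \\ 1 & 0 \end{smallmatrix}\right)$, whose $(1,2)$-entry is $-d\,S_{n'-n}$ for the Lucas-type sequence $S_r = (t_1^r - t_2^r)/(t_1-t_2)$; vanishing of $u_n,u_{n'}$ forces $S_{n'-n}=0$ and hence the root-of-unity relation, which also cleanly absorbs the bookkeeping when $n=0$.

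The repeated-root and identically-vanishing configurations are where I expect the real difficulty to lie. If $t_1 = t_2$ with common value $t$, then $t^2 = \alpha^2$ and $2t = \alpha^2 - 4\alpha + 1$ force $\alpha^2 - 6\alpha + 1 = 0$ or $\alpha^2 - 2\alpha + 1 = 0$, so $\alpha + \alpha^{-1} \in \{6,2\}$ and $\alpha$ is a unit directly. The genuinely delicate case is $t_1 \neq t_2$ with $A = B = 0$: here, propagating the recurrence both forward and backward (legitimate because $d = \alpha^2 \neq 0$) shows this occurs exactly when two consecutive terms vanish, forcing $u_k = 0$ for all $k \geq 0$, i.e. $\alpha$ a common root of $P_0$ and $P_1$. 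In this configuration the bare recurrence imposes no root-of-unity constraint, so I would exclude it using the extra structure of the intended sequences — the fixed nonzero constant term from \eqref{eq:gn mod x2 congruence} and the controlled value modulo $x-1$ from Lemma~\ref{lem:2-adic congruence at shift}, which prevent consecutive terms from sharing a root — rather than relying on the recurrence alone. This is the one step where the second-order recurrence is by itself insufficient, and is where I would concentrate the most care.
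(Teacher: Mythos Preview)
Your approach mirrors the paper's: solve the second-order recurrence explicitly and deduce from $u_n = u_{n'} = 0$ that the ratio of the two characteristic roots is a root of unity, whence $\alpha + \alpha^{-1}$ is an algebraic integer. The paper packages this as $\pi(y)^{2(n'-n)} = 1$ and then notes that $\alpha$ is a root of the monic polynomial $f_{2(n'-n)}(x) - 2x^{2(n'-n)}$, which has constant term $1$; your route via $\alpha + \alpha^{-1} = 4 + (\mu + \mu^{-1})$ is an equivalent and arguably cleaner way to the same conclusion. Your explicit treatment of the repeated-root case (which the paper leaves implicit) is also correct.

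Your caution about the configuration $A = B = 0$ is well placed and in fact exposes a gap that the paper's own proof shares: the passage ``solving the system of equations \dots\ yields $\pi(xy)^{n'-n} = \pi(xy^{-1})^{n'-n}$'' tacitly assumes $(\pi(P_+),\pi(P_-)) \neq (0,0)$. The lemma as literally stated is false without such a hypothesis: take $P_0(x) = x-2$ and $P_1(x) = (x-2)(x^2+ax+b)$ for any integers $a,b$; then every $P_n$ is divisible by $x-2$, yet $\alpha = 2$ is not a unit. Your instinct to invoke extra structure on the intended sequences is therefore correct, though the specific facts you cite (the fixed constant term from \eqref{eq:gn mod x2 congruence} and the value at $1$ from Lemma~\ref{lem:2-adic congruence at shift}) do not by themselves preclude a common nonzero root of $P_0$ and $P_1$. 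A cleaner route for the applications is to observe that for the sequences of Lemmas~\ref{lem:poly factor of right norm} and \ref{lem:poly factor of bounded codegree} one has, by \eqref{eq:gnk algebraic representation}, $P_\pm = \sum_i a_i s_\pm^i$ with $s_\pm = (xy^{\pm 1}+1)/(x-1)$; since any common root $\alpha$ lies in $[a,b]$ and hence $|\pi(s_\pm)| = \sqrt{2}$ by \eqref{eq:norm of moebius}, the strict form of the root hypothesis in Lemma~\ref{lem:hn roots interval general} (which holds in all the paper's constructions) forces $\pi(P_\pm) \neq 0$ and rules out the degenerate case.
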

\begin{proof}
In the ring $R$ from \eqref{eq:solution ring}, we can solve the recurrence 
\eqref{eq:P recurrence relation} to obtain an analogue of \eqref{eq:fn equation}: for some $P_+, P_- \in R$ (independent of $n$),
\[
P_n = P_+(xy)^n + P_- (xy^{-1})^n.
\]
Define a specialization homomorphism $\pi: R \to \CC$ taking $x$ to $\alpha$ by picking a square root of $\alpha + \alpha^{-1} - 4$;
then solving the system of equations
\[
\pi(P_+) \pi(xy)^n + \pi(P_-) \pi(xy^{-1})^n = \pi(P_+) \pi(xy)^{n'} + \pi(P_-) \pi(xy^{-1})^{n'} = 0
\]
yields
\[
\pi(xy)^{n'-n} = \pi(xy^{-1})^{n'-n}
\]
and so $\pi(y)^{2(n'-n)} = 1$. By \eqref{eq:fn equation} this yields $f_{2(n'-n)}(\alpha) = 2 \alpha^{2(n'-n)}$; since 
$\deg f_{2(n'-n)}(x) = 4(n'-n) > 2(n'-n)$ and $f_{2(n'-n)}(0) = 1$ by \eqref{eq:fn mod x2 congruence}, 
$\alpha$ is a root of the monic polynomial $f_{2(n'-n)}(x) - 2x^{2(n'-n)}$ with constant coefficient 1.
\end{proof}

This has the following implication for Theorem~\ref{thm:AV by order}.
\begin{lemma} \label{lem:poly factor of right norm}
Let $m>1$ be an integer and fix a sequence $a_0,\dots,a_k$
of integers satisfying the hypotheses of
Lemma~\ref{lem:hn roots interval general}.
Suppose that for infinitely many $n$, the polynomial
$P_n(x) = \sum_i a_i g_{n,i}(x)$ over $\QQ$
has an irreducible factor $Q(x)$ with $P(0) = \pm m$. Then
there exist infinitely many simple abelian varieties $A$ over $\FF_2$ with $\#A(\FF_2) = m$.
\end{lemma}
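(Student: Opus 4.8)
The plan is to push each irreducible factor $Q(x)$ through Lemma~\ref{lem:AV from poly} and then invoke Lemma~\ref{lem:repeated roots} to force the resulting isogeny classes apart. First I would note that for each $n$ in the given infinite index set, the chosen monic irreducible factor $Q(x)$ of $P_n(x)$ has all of its complex roots among the roots of $P_n(x)$, which by Lemma~\ref{lem:hn roots interval general} lie in $[a,b]$. Hence $Q(x)$ satisfies the hypotheses of Lemma~\ref{lem:AV from poly}, which produces a simple abelian variety $A$ over $\FF_2$ with $\#A(\FF_2) = (-1)^{\deg Q(x)}Q(0)$. Since a group order is positive while $|Q(0)| = m$, the sign is forced and $\#A(\FF_2) = m$; this sign bookkeeping against the parity of $\deg Q(x)$ is the only routine verification, and it is automatic. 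Thus every such $n$ yields at least one simple abelian variety of order $m$, whose isogeny class is recorded by the polynomial $R(x)$ attached to $Q(x)$ in Lemma~\ref{lem:AV from poly}.

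The substance of the argument is to rule out these abelian varieties collapsing into finitely many isogeny classes, and I expect this to be the main point. I would argue at the level of the factor itself. Suppose a single monic irreducible $Q(x)$ occurs as a factor of both $P_n(x)$ and $P_{n'}(x)$ for two indices $n < n'$ in our family. Pick any root $\alpha$ of $Q(x)$; because $m > 1$ we have $Q(0) \neq 0$, so $\alpha \in \CC^\times$, and $P_n(\alpha) = P_{n'}(\alpha) = 0$. As the sequence $\{P_n(x)\}$ obeys the recurrence \eqref{eq:P recurrence relation}, Lemma~\ref{lem:repeated roots} forces $\alpha$ to be a unit in the ring of algebraic integers. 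But $Q(x)$ is the minimal polynomial of $\alpha$, so $Q(0) = (-1)^{\deg Q(x)} N(\alpha)$, whence $|Q(0)| = |N(\alpha)| = 1$, contradicting $|Q(0)| = m > 1$. Therefore each such irreducible $Q(x)$ divides at most one term of the sequence.

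Consequently, the assignment sending $n$ to its chosen factor $Q(x)$ is injective on the infinite index set, so infinitely many distinct irreducible polynomials $Q(x)$ arise. Since the map $Q(x) \mapsto R(x)$ of Lemma~\ref{lem:AV from poly} is injective and $R(x)$ determines the isogeny class by Honda--Tate, this yields infinitely many pairwise non-isogenous simple abelian varieties over $\FF_2$, each of order $m$. I would emphasize that the hypothesis $m > 1$ enters in exactly one place: Lemma~\ref{lem:repeated roots} only tells us a shared root is a \emph{unit}, and it is precisely the nontriviality $|Q(0)| = m > 1$ that converts ``$\alpha$ is a unit'' into a contradiction. (As a consistency check, since there are only finitely many monic integer polynomials of each fixed degree with all roots in the bounded interval $[a,b]$, the degrees of the $Q(x)$ must be unbounded, matching the phrase ``of various dimensions'' in Theorem~\ref{thm:AV by order}.)
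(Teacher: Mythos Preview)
Your proof is correct and follows essentially the same route as the paper: use Lemma~\ref{lem:hn roots interval general} to place the roots of $Q(x)$ in $[a,b]$, apply Lemma~\ref{lem:AV from poly}, and invoke Lemma~\ref{lem:repeated roots} to separate the resulting isogeny classes. The paper's proof is terser, simply asserting that ``by Lemma~\ref{lem:repeated roots}, the factors $Q(x)$ are pairwise distinct,'' whereas you spell out the intended mechanism---a shared factor would force a root $\alpha$ to be a unit, contradicting $|Q(0)|=m>1$---and you add the (correct) bookkeeping on signs and on injectivity of $Q\mapsto R$.
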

\begin{proof}
By Lemma~\ref{lem:hn roots interval general}, the polynomial $P_n(x)$ has all of its roots in $[a,b]$, as then does $Q(x)$. 
By Lemma~\ref{lem:repeated roots}, the factors $Q(x)$ are pairwise distinct. We may thus apply Lemma~\ref{lem:AV from poly} to conclude.
\end{proof}

We also need a slightly modified version of Lemma~\ref{lem:poly factor of right norm}.
\begin{lemma} \label{lem:poly factor of bounded codegree}
Let $m>1$ be an integer and fix a sequence $a_0,\dots,a_k$ of integers satisfying the hypotheses of 
Lemma~\ref{lem:hn roots interval general},
and additionally satisfying $\sum_{i=0}^k a_i 2^i \in \{ \pm m\}$.
Suppose that for infinitely many $n$, the polynomial
$P_n(x) = \sum_i a_i g_{n,i}(x)$ over $\QQ$
has a monic irreducible factor $Q(x)$ whose codegree
(i.e., $\deg P_n(x) - \deg Q(x)$) is bounded by a function of $m$ alone.
Then
there exist infinitely many simple abelian varieties $A$ over $\FF_2$ with $\#A(\FF_2) = m$.
\end{lemma}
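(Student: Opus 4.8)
The plan is to reduce to Lemma~\ref{lem:poly factor of right norm}: I will show that for all but finitely many of the given values of $n$, the bounded-codegree irreducible factor, which I denote $Q_n(x)$, in fact satisfies $Q_n(0) = \pm m$. Once this is established those $n$ still form an infinite set, and Lemma~\ref{lem:poly factor of right norm} applies to finish.

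Write $C_n(x) \colonequals P_n(x)/Q_n(x)$ for the complementary cofactor. Since $P_n$ is monic with integer coefficients — its leading term is $a_k g_{n,k} = g_{n,k}$, which is monic of degree $2n+k$ by Lemma~\ref{lem:gnk polynomial} — Gauss's lemma shows $C_n \in \ZZ[x]$ is monic, of degree equal to the codegree of $Q_n$ and hence at most some bound $d_0$ depending only on $m$. The first observation is that the constant term of $P_n$ is eventually constant: by \eqref{eq:gn mod x2 congruence} we have $g_{n,i}(0) = (-2)^i$ for $n>0$, so $P_n(0) = \sum_{i=0}^k a_i(-2)^i$ is a fixed nonzero integer independent of $n$, which the supplementary hypothesis on the $a_i$ is arranged to make of absolute value $m$. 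Since $P_n(0) = Q_n(0)C_n(0)$, it now suffices to prove that $|C_n(0)| = 1$ for all but finitely many $n$.

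The key input is a finiteness statement. By Lemma~\ref{lem:hn roots interval general} every root of $P_n$ lies in $[a,b]$, hence so does every root of the monic integer polynomial $C_n$; as the coefficients of $C_n$ are, up to sign, elementary symmetric functions of at most $d_0$ numbers in the fixed bounded interval $[a,b]$, they are absolutely bounded. Thus $C_n$ ranges over a finite set of polynomials as $n$ varies, and every irreducible factor occurring in some $C_n$ belongs to a fixed finite set $\mathcal{F}$ of monic irreducible integer polynomials with all roots in $[a,b]$ of degree at most $d_0$. I partition $\mathcal{F}$ into those $\varphi$ dividing $P_n$ for infinitely many $n$ and those dividing $P_n$ for only finitely many $n$. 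The finitely many members of the second type collectively divide only finitely many $P_n$, so outside a finite exceptional set of indices every irreducible factor of $C_n$ is of the first type. For such a $\varphi$, which divides two distinct terms $P_n$ and $P_{n'}$, Lemma~\ref{lem:repeated roots} forces every root of $\varphi$ to be a unit in the ring of algebraic integers; since $\varphi(0)$ is then a rational integer equal to a product of such units, $\varphi(0) = \pm 1$. Therefore $|C_n(0)| = 1$ for all but finitely many $n$, whence $Q_n(0) = \pm m$, exactly the input required by Lemma~\ref{lem:poly factor of right norm}.

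The crux — and the reason the codegree hypothesis is present — is the third paragraph: bounding $\deg C_n$ is precisely what confines $C_n$ to a finite family, and Lemma~\ref{lem:repeated roots} then does the real work by showing that any irreducible factor recurring across the sequence has unit roots and so contributes only $\pm 1$ to the constant term, so that the entire ``mass'' $m = |P_n(0)|$ must be carried by $Q_n$. The only point demanding care is to organize the partition of $\mathcal{F}$ so that merely finitely many values of $n$ are discarded; the remaining verifications (monicity and integrality of $C_n$, the bound on its coefficients, and the unit computation $\varphi(0)=\pm1$) are routine.
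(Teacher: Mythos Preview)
Your argument is correct and follows the same route as the paper: write $P_n = Q_n C_n$, use the bounded codegree together with the root bound from Lemma~\ref{lem:hn roots interval general} to confine $C_n$ to a finite set of monic integer polynomials, apply Lemma~\ref{lem:repeated roots} to conclude that $C_n(0) = \pm 1$ for all but finitely many $n$, and then invoke Lemma~\ref{lem:poly factor of right norm}. Your version is more explicit in unpacking the application of Lemma~\ref{lem:repeated roots} (decomposing $C_n$ into irreducibles and partitioning them by whether they recur across the sequence), whereas the paper simply asserts the conclusion directly from the finiteness of the set of cofactors.
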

\begin{proof}
By hypothesis, we can write
$P_n(x) = Q(x) R(x)$ where $\deg R(x)$
is bounded by a function of $m$ alone.
By Lemma~\ref{lem:hn roots interval general}, $P_n(x)$ has all roots in $[a,b]$, as then do $Q(x)$ and $R(x)$.
Since $R(x)$ has integer coefficients and roots in a fixed interval,
$R(x)$ itself is contained in a finite set determined by $m$.
By Lemma~\ref{lem:repeated roots}, there are only finitely many values of $n$ for which $R(x)$ has constant term not in $\{\pm 1\}$.
For the remaining values, $Q(x)$ 
is a monic irreducible polynomial with $Q(0) = \pm m$. We may thus apply Lemma~\ref{lem:poly factor of right norm} to conclude.
\end{proof}

\section{Nonadjacent binary representations}

In order to apply Lemma~\ref{lem:hn roots interval general}, we need to find ways to represent a given positive integer $m$ as the evaluation at $z=2$
of a monic integer polynomial $Q(z)$ having all complex roots in the disc $|z| \leq \sqrt{2}$. That is, we need a \emph{binary representation} of $m$
which is ``efficient'' in a suitable sense.

One good candidate is the \emph{nonadjacent binary representation} of $m$ in the sense of Reitwiesner \cite{reitwiesner}:
\begin{equation} \label{eq:nonadjacent rep}
m = \sum_{i=0}^\infty a_i 2^i \qquad \mbox{where} \qquad a_i \in \{-1,0,1\}, \, a_k = 1, \, a_i a_{i+1} = 0 \quad (i \geq 0).
\end{equation}
The sequence $a_0,\dots,a_k$ can be generated efficiently from $m$ using the rule
\[
a_0 = \begin{cases} \pm 1 & m \equiv \pm 1 \pmod{4} \\
0 & m \equiv 0 \pmod{2}. 
\end{cases}
\]
Moreover, the largest index $k$ with $a_k \neq 0$ (and hence $a_k=1$) is $k(m) = \lfloor \log_2 (3m) \rfloor - 1$. 

Define the polynomial
\[
h_{n,m}(x) \colonequals \sum_{i=0}^k (-1)^{i+k} a_i g_{n,i}(x);
\]
by Lemma~\ref{lem:gnk polynomial}, $h_{n,m}(x)$ is a monic polynomial of degree $2n+k$.
By \eqref{eq:gn mod x2 congruence},
\begin{equation} \label{eq:hnm value}
h_{n,m}(0) = (-1)^k m.
\end{equation}
Since we chose $a_0,\dots,a_k$ without reference to $n$,
we deduce from \eqref{eq:gn recurrence relation} that
\begin{equation} \label{eq:hn recurrence relation}
h_{n,k}(x) - (x^2 - 4x + 1)h_{n-1,k}(x) + x^2 h_{n-2,k}(x) = 0.
\end{equation}

\begin{lemma} \label{lem:hn roots interval}
The roots of the polynomial $h_{n,m}(x)$ are all real, pairwise distinct, and contained in the interval $[a,b] = [3-2\sqrt{2}, 3+2\sqrt{2}]$.
\end{lemma}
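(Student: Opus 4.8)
The plan is to realize $h_{n,m}(x)$ as an instance of the family $P_n(x)$ from Lemma~\ref{lem:hn roots interval general} and then to verify the hypothesis on the associated polynomial $Q(z)$ by means of the sufficient condition \eqref{eq:weight condition}. Setting $b_i \colonequals (-1)^{i+k} a_i$, we have $h_{n,m}(x) = \sum_{i=0}^k b_i g_{n,i}(x)$ with leading coefficient $b_k = (-1)^{2k} a_k = 1$, so the sequence $b_0,\dots,b_k$ is of the type required by Lemma~\ref{lem:hn roots interval general}. Since the conclusion we want (roots real, pairwise distinct, and in $[a,b]$) is exactly the strong form of that lemma, it suffices to check that $Q(z) = \sum_{i=0}^k b_i z^i$ has all of its complex roots strictly inside the open disc $|z| < \sqrt{2}$.

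First I would reduce this to the weight condition \eqref{eq:weight condition}. Because $|b_i| = |a_i|$, the relevant quantity is $\sum_{i=0}^{k-1} |a_i| 2^{(i-k)/2}$, and the triangle-inequality argument following Lemma~\ref{lem:hn roots interval general} shows that if this sum is at most $1$ then $|z^{-k} Q(z)| > 0$ for $|z| > \sqrt{2}$. To force the roots into the open disc (which is what yields pairwise distinctness), I would observe that the same bound gives $|z^{-k} Q(z)| \geq 1 - \sum_{i=0}^{k-1} |a_i| 2^{(i-k)/2}$ on the entire boundary $|z| = \sqrt{2}$ as well, so a \emph{strict} inequality in \eqref{eq:weight condition} already excludes roots on the circle.

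The crux is therefore the estimate $\sum_{i=0}^{k-1} |a_i| 2^{(i-k)/2} < 1$, and this is exactly where the nonadjacency in \eqref{eq:nonadjacent rep} enters. Since $a_k = 1 \neq 0$, the relation $a_i a_{i+1} = 0$ forces $a_{k-1} = 0$; more generally, the indices $i < k$ with $a_i \neq 0$ all satisfy $i \leq k-2$ and are separated by gaps of at least $2$. Listing them in decreasing order, the $t$-th such index obeys $i_t \leq k - 2t$, whence $2^{(i_t - k)/2} \leq 2^{-t}$ and the whole sum is bounded by $\sum_{t \geq 1} 2^{-t} = 1$; because the representation is finite, this truncates to a partial sum $1 - 2^{-N} < 1$. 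Thus the weight condition holds with strict inequality for every nonadjacent representation, and the open-disc form of Lemma~\ref{lem:hn roots interval general} delivers the claim.

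I expect the main (though modest) obstacle to be precisely the \emph{strictness} of this weight estimate: for an arbitrary binary representation the sum in \eqref{eq:weight condition} can reach or exceed $1$, so the nonadjacency condition is what guarantees the gaps making the geometric bound strict. Everything else is bookkeeping---tracking the sign twist $b_i = (-1)^{i+k} a_i$ (which leaves $|b_i|$, and hence the weight, unchanged) and invoking Lemma~\ref{lem:hn roots interval general} in its sharp form.
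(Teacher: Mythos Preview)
Your proof is correct and follows essentially the same approach as the paper: verify the weight condition \eqref{eq:weight condition} with a strict inequality using the nonadjacent form, and then invoke the open-disc case of Lemma~\ref{lem:hn roots interval general}. The paper compresses the estimate to a single line, while you spell out the sign bookkeeping $b_i=(-1)^{i+k}a_i$ and the reason strictness in \eqref{eq:weight condition} forces the roots into the open disc, but the substance is identical.
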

\begin{proof}
From the definition of nonadjacent binary representations, we see that
\begin{equation} \label{eq:bound from nonadjacent}
\sum_{i=0}^{k-1} |a_i| 2^{(i-k)/2} \leq 1 + 2^{-1} + \cdots + 2^{-\lfloor k/2 \rfloor} \leq 1 - 2^{-k/2} < 1.
\end{equation}
We may thus apply Lemma~\ref{lem:hn roots interval general}.
\end{proof}

In passing, we can already derive some cases of Theorem~\ref{thm:AV by order}, including the case $m=2$ considered in \cite{kadets}.
While this case is logically necessary for the rest of the proof, it does illustrate the key ideas with limited technical complications compared to the general case.

\begin{lemma} \label{lem:polynomials with roots in interval prime}
Theorem~\ref{thm:AV by order} holds when $m$ is prime.
\end{lemma}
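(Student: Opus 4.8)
The plan is to feed the polynomials $h_{n,m}(x)$ from the nonadjacent binary representation directly into Lemma~\ref{lem:poly factor of right norm}. First I would set $b_i \colonequals (-1)^{i+k} a_i$, where $a_0,\dots,a_k$ is the nonadjacent binary representation \eqref{eq:nonadjacent rep} of $m$, so that $P_n(x) = \sum_i b_i g_{n,i}(x)$ is exactly $h_{n,m}(x)$. Since $b_k = a_k = 1$ and $\sum_{i=0}^{k-1} |b_i| 2^{(i-k)/2} = \sum_{i=0}^{k-1} |a_i| 2^{(i-k)/2} < 1$ by \eqref{eq:bound from nonadjacent}, the sequence $\{b_i\}$ satisfies condition \eqref{eq:weight condition} and hence the hypotheses of Lemma~\ref{lem:hn roots interval general}. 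In particular, by Lemma~\ref{lem:hn roots interval} every root of $h_{n,m}(x)$ lies in $[a,b]$, which consists of positive reals because $a = 3 - 2\sqrt{2} > 0$.

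The crux is a factorization argument exploiting primality. I would factor $h_{n,m}(x) = \prod_j Q_j(x)$ into monic irreducible polynomials over $\ZZ$ (legitimate by Gauss's lemma, as $h_{n,m}$ is monic). Because every root of $h_{n,m}$ is a positive real, each $Q_j(0)$ is a nonzero integer satisfying $(-1)^{\deg Q_j} Q_j(0) = \prod_\rho \rho > 0$, where $\rho$ ranges over the roots of $Q_j$; in particular $|Q_j(0)|$ equals that product. Multiplying over $j$ and invoking \eqref{eq:hnm value} yields $\prod_j |Q_j(0)| = |h_{n,m}(0)| = m$. Since $m$ is prime, exactly one factor $Q(x)$ has $|Q(0)| = m$, while every other factor has constant term $\pm 1$.

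This produces, for every $n \geq 0$ (in particular for infinitely many $n$), a monic irreducible factor $Q(x)$ of $P_n(x) = h_{n,m}(x)$ with $Q(0) = \pm m$. Applying Lemma~\ref{lem:poly factor of right norm} to the sequence $\{b_i\}$ (which is valid since $m$ prime gives $m > 1$) then yields infinitely many simple abelian varieties over $\FF_2$ of order $m$, completing the proof. Note that the sign works out correctly inside that lemma: positivity of the roots of $Q$ forces $(-1)^{\deg Q} Q(0) = m$ rather than $-m$, so Lemma~\ref{lem:AV from poly} delivers order exactly $m$.

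I expect no serious obstacle: the only real content is the factorization step, and primality makes it immediate by concentrating all of $m$ in one irreducible factor. The reason this simple argument does not extend to composite $m$ is that the prime factors of $m$ can be distributed among several of the $Q_j$, so no single irreducible factor need have constant term $\pm m$. Circumventing this is precisely what motivates the codegree formulation of Lemma~\ref{lem:poly factor of bounded codegree} together with the later 2-adic Newton polygon estimates.
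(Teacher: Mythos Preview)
Your proposal is correct and follows essentially the same approach as the paper: verify the hypotheses of Lemma~\ref{lem:hn roots interval general} for the sequence $b_i=(-1)^{i+k}a_i$ (equivalently, invoke Lemma~\ref{lem:hn roots interval}), use primality of $m$ together with \eqref{eq:hnm value} to extract an irreducible factor with constant term $\pm m$, and then apply Lemma~\ref{lem:poly factor of right norm}. The paper compresses all of this into a single sentence, while you have made explicit the factorization step (via positivity of the roots and Gauss's lemma) that the paper leaves to the reader; your added remarks on the sign and on why the argument fails for composite $m$ are accurate but not required.
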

\begin{proof}
This is immediate from Lemma~\ref{lem:poly factor of right norm}
and Lemma~\ref{lem:hn roots interval}:
if $m$ is prime, then $h_{n,m}(x)$ admits a unique irreducible factor with constant coefficient $\pm m$.
\end{proof}

\section{2-adic congruences: even order case}

In this section, we prove Theorem~\ref{thm:AV by order} for $m$ even, using factorizations over the 2-adic field $\QQ_2$.
Let $v_2(m)$ denote the $2$-adic valuation of $m$.
By convention, our Newton polygons are convex with left endpoint $(0,0)$.

As a warmup, we treat the case where $v_2(m) = 1$.
\begin{lemma} \label{lem:polynomials with roots in interval even}
Theorem~\ref{thm:AV by order} holds when $m \equiv 2 \pmod{4}$.
\end{lemma}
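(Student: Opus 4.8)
The plan is to apply Lemma~\ref{lem:poly factor of bounded codegree}, which reduces the theorem for a given $m$ to producing a coefficient sequence $a_0,\dots,a_k$ satisfying the root hypothesis of Lemma~\ref{lem:hn roots interval general} together with $\sum a_i 2^i \in \{\pm m\}$, such that for infinitely many $n$ the polynomial $P_n(x) = \sum_i a_i g_{n,i}(x)$ has an irreducible factor of codegree bounded in terms of $m$. The natural choice is the nonadjacent binary representation, i.e. $P_n = \pm h_{n,m}$, since Lemma~\ref{lem:hn roots interval} already guarantees the roots lie in $[a,b]$. So the real work is entirely $2$-adic: I must show that $h_{n,m}(x)$ has, for infinitely many $n$, an irreducible factor whose complementary factor has bounded degree.

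**The Newton polygon computation.** Write $m = 2m'$ with $m'$ odd, so $v_2(m)=1$. The idea is to read off the $2$-adic Newton polygon of $h_{n,m}(x)$ from the congruences established earlier. By \eqref{eq:hnm value} the constant term is $(-1)^k m$, which has $v_2 = 1$, so the Newton polygon ends at height $1$ over the abscissa $2n+k = \deg h_{n,m}$. The congruence \eqref{eq:gnk mod 2 leading term}, assembled over the nonadjacent representation, controls $h_{n,m}(x) \bmod 2$: the bottom edge of the polygon should consist of a long segment of slope $0$ (the coefficients that are $2$-adic units, coming from the $x^{2n}(x+1)^k$ part) followed by a single short segment of slope $1/(\text{codegree})$ descending to the constant term. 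Concretely, I expect exactly one nonzero slope, a segment from some vertex $(2n+k-c,\,0)$ down to $(2n+k,\,1)$ of horizontal length $c$ bounded by a function of $m$, where $c$ is governed by the location of the lowest-degree unit coefficient modulo $2$. By the theory of Newton polygons, this slope-$1$ segment of length $c$ contributes factors of total degree $c$, while the slope-$0$ part contributes the remaining factor of degree $2n+k-c$.

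**Extracting an irreducible factor.** The slope-$0$ segment produces a factor $Q(x)$ of degree $2n+k-c$ whose roots are $2$-adic units, i.e. $v_2(Q(0)) = 0$. I then need this factor to be irreducible over $\QQ$ for infinitely many $n$; but in fact Lemma~\ref{lem:poly factor of bounded codegree} does not require irreducibility to be proved directly — it only asks for a monic irreducible factor of bounded codegree. The cleanest route is: the slope-$1$ segment forces the existence of a factor $R(x)$ of degree $c \le c(m)$ carrying all the valuation, and $Q(x) = h_{n,m}(x)/R(x)$ then has codegree $c$ bounded by a function of $m$ alone; passing to any irreducible factor of $Q(x)$ containing the appropriate piece, and invoking Lemma~\ref{lem:poly factor of bounded codegree}, which itself uses Lemma~\ref{lem:repeated roots} to discard the finitely many degenerate $n$, completes the argument. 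In fact it suffices to exhibit a single irreducible factor of codegree $\le c(m)$; the Newton polygon guarantees the high-degree slope-$0$ factor is coprime to the low-slope part, so I take the product of all irreducible factors of nonzero slope as $R$ and let $Q$ be the cofactor.

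**The main obstacle** will be verifying that the nonzero-slope part of the Newton polygon genuinely has bounded horizontal length, uniformly in $n$. This requires pinning down, from the mod-$2$ and mod-$8$ congruences \eqref{eq:gnk mod 2 leading term}, \eqref{eq:gn1 mod 4 congruence}, and \eqref{eq:gnk mod 8 congruence}, exactly which low-order coefficients of $h_{n,m}(x)$ are $2$-adic units and which are divisible by $2$; the worry is that for a bad representation the break point $c$ could grow with $n$. Here the nonadjacency condition \eqref{eq:nonadjacent rep} does double duty — it already controlled the archimedean size in \eqref{eq:bound from nonadjacent}, and I expect it to simultaneously force the low-degree coefficients to stabilize so that $c$ depends only on $k = k(m)$ and hence only on $m$. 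The subtlety is that in the case $v_2(m)=1$ the single slope-$1$ segment is clean, whereas for higher $v_2(m)$ one would need the full machinery of the later sections; restricting to $m \equiv 2 \pmod 4$ is precisely what keeps the Newton polygon to a single nontrivial slope and makes the codegree bound transparent.
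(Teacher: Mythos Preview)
Your Newton polygon picture is reversed, and this is a genuine gap. Modulo $2$, equation~\eqref{eq:gnk mod 2 leading term} gives $h_{n,m}(x) \equiv x^{2n}\sum_i a_i(x+1)^i$ (the extra $+m$ vanishes since $m$ is even), so the $2$-adic unit coefficients of $h_{n,m}$ occur only in degrees between $2n$ and $2n+k$; every coefficient in degree $<2n$ is divisible by $2$. In the paper's convention (leftmost vertex at the leading coefficient), the vertices are therefore $(0,0)$, $(k-d,0)$, $(2n+k,1)$, where $d$ is the order of vanishing of $\sum_i a_i(x+1)^i$ at $x=0$ over $\FF_2$. So the slope-$0$ segment is \emph{short} (length $k-d \le k(m)$) and the positive-slope segment is \emph{long} (length $2n+d$), exactly the opposite of what you describe.

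The correct argument then runs differently from your plan. The long segment has slope $1/(2n+d)$, and because the numerator is $1$ this segment corresponds to a single irreducible factor of $h_{n,m}$ over $\QQ_2$, of degree $2n+d$. Hence over $\QQ$ there is an irreducible factor of degree at least $2n+d$, i.e.\ codegree at most $k-d \le k(m)$, and Lemma~\ref{lem:poly factor of bounded codegree} applies. Your alternative route---take the cofactor of a short segment and then ``pass to any irreducible factor'' of it---would not work even on its own terms: an irreducible factor of a large-degree polynomial need not have bounded codegree (imagine the cofactor splitting into many quadratics). The crucial point you are missing is the numerator-$1$ slope forcing $\QQ_2$-irreducibility of the large piece.
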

\begin{proof}
From \eqref{eq:gnk mod 2 leading term},
\begin{equation}
\label{eq:hn mod 2 leading term}
h_{n,m}(x) \equiv x^{2n} \sum_{i=0}^{k(m)} a_i (x+1)^i + m\pmod{ 2}.
\end{equation}
By \eqref{eq:hnm value} and \eqref{eq:hn mod 2 leading term}, the 2-adic Newton polygon of $h_{n,m}(x)$ has vertices
\[
(0,0), (k(m)-d, 0), (2n+k(m), 1)
\]
for some $d \in \{0,\dots,k(m)\}$. 
The last segment corresponds to an irreducible factor of $h_{n,m}(x)$ over $\QQ_2$;
hence over $\QQ$, $h_{n,m}(x)$ has an irreducible factor of codegree bounded by a function of $m$.
We may thus combine Lemma~\ref{lem:poly factor of bounded codegree} and Lemma~\ref{lem:hn roots interval} to conclude.
\end{proof}

We next generalize the Newton polygon calculation from the previous argument.
\begin{lemma} \label{lem:2-adic NP}
For $m$ even and $n \gg 0$, the $2$-adic Newton polygon of $h_{n,m}(x)$ has vertices
\[
(0,0), (k(m)-d, 0), (2n+ k(m), v_2(m))
\]
where $d$ is the order of vanishing of $\sum_{i=0}^k a_i (x+1)^k$ at $x=0$ over $\FF_2$.
\end{lemma}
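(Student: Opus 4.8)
The plan is to read the two segments of the Newton polygon off from congruences for the coefficients of $h_{n,m}(x) = \sum_{j=0}^{2n+k} c_j x^j$ modulo powers of $2$. Since $h_{n,m}$ is monic with $h_{n,m}(0) = (-1)^k m$ by \eqref{eq:hnm value}, the polygon runs from $(0,0)$ to $(2n+k, v_2(m))$, the point at horizontal position $i$ recording $v_2(c_{2n+k-i})$. To locate the first vertex I would reduce modulo $2$: combining \eqref{eq:gnk mod 2 leading term} with the definition of $h_{n,m}$, and using $a_0 = 0$ since $m$ is even, gives $h_{n,m}(x) \equiv x^{2n}\sum_{i=0}^k a_i(x+1)^i \equiv x^{2n+d}Q(x) \pmod 2$, where $Q$ is monic of degree $k-d$ with $Q(0)\neq 0$ in $\FF_2$. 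Thus $c_{2n+k}$ and $c_{2n+d}$ are odd while $c_j$ is even for $0\le j<2n+d$; since all ordinates are non-negative and vanish at positions $0$ and $k-d$, the lower hull over $[0,k-d]$ is horizontal, producing the vertex $(k-d,0)$ and an upward turn thereafter.

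For the final segment I would bound the valuations of the low-degree coefficients from below. The key input is \eqref{eq:gnk mod 2 congruence}, which says the coefficient of $x^j$ in $g_{n,i}(x)$ is divisible by $2^i$ whenever $j<2n$. Since $m$ even forces $a_i = 0$ for $i < v_2(m)$ and $a_{v_2(m)} = \pm 1$, so that $v_2(m)$ is exactly the least index with $a_i \neq 0$, summing over $i$ in $h_{n,m} = \sum_i (-1)^{i+k} a_i g_{n,i}$ gives $v_2(c_j) \ge v_2(m)$ for every $0 < j < 2n$. Combining this with $v_2(c_0) = v_2(m)$ and the bound $v_2(c_j) \ge 1$ for $2n \le j < 2n+d$ from the mod $2$ computation above, I would verify that every point $(2n+k-j, v_2(c_j))$ lies on or above the segment from $(k-d,0)$ to $(2n+k, v_2(m))$: this segment never exceeds height $v_2(m)$, so the coefficients with $0<j<2n$ lie above it, while for $n \gg 0$ its height stays below $1$ over the short range $(k-d, k]$ occupied by the $d$ transition coefficients. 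Hence the final portion is a single segment with no interior vertices, and the polygon has exactly the claimed vertices.

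The main obstacle is this last valuation estimate. In the warmup with $v_2(m)=1$ the final segment had no interior vertices for free, since no integer ordinate lies strictly between $0$ and $1$; for larger $v_2(m)$ one must actively exclude intermediate vertices at heights $1,\dots,v_2(m)-1$, and it is precisely the bound $v_2(c_j)\ge v_2(m)$ for $j<2n$---resting on \eqref{eq:gnk mod 2 congruence} together with the identification of $v_2(m)$ with the position of the lowest nonzero digit of the nonadjacent representation---that forces the single segment. The hypothesis $n\gg0$ enters only to lift the finitely many transition coefficients above the nearly horizontal segment near the kink.
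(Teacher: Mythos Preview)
Your argument is correct and follows the same approach as the paper: the paper compresses your two inputs into the single congruence $h_{n,m}(x) \equiv 0 \pmod{(x^{2n},2^{v_2(m)})}$ obtained from \eqref{eq:gnk mod 2 congruence} (using implicitly, as you make explicit, that $a_i=0$ for $i<v_2(m)$), and combines it with \eqref{eq:hn mod 2 leading term} to read off the polygon. Your treatment is in fact more detailed than the paper's, which simply asserts ``we deduce the claim'' without spelling out the Newton polygon geometry or the precise role of $n\gg 0$ in lifting the transition coefficients above the final segment.
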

\begin{proof}
From \eqref{eq:gnk mod 2 congruence}, we have
\begin{equation} \label{eq:hn mod 2 congruence}
h_{n,m}(x) \equiv 0 \pmod{(x^{2n}, 2^{v_2(m)})}.
\end{equation}
By combining \eqref{eq:hn mod 2 leading term} with \eqref{eq:hn mod 2 congruence}, we deduce the claim.
\end{proof}

This gives us a direct adaptation of  Lemma~\ref{lem:polynomials with roots in interval even} when $v_2(m)$ is odd.
\begin{lemma}  \label{lem:polynomials with roots in interval odd 2-adic valuation}
Theorem~\ref{thm:AV by order} holds when $v_2(m)$ is odd.
\end{lemma}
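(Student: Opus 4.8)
The plan is to run the Newton polygon argument of Lemma~\ref{lem:polynomials with roots in interval even}, now fed by the general computation of Lemma~\ref{lem:2-adic NP}. For $n \gg 0$ the $2$-adic Newton polygon of $h_{n,m}(x)$ has vertices $(0,0)$, $(k(m)-d,0)$, $(2n+k(m),v_2(m))$, with $d$ depending only on $m$. The final segment joins $(k(m)-d,0)$ to $(2n+k(m),v_2(m))$, so it has horizontal length $2n+d$ and slope $v_2(m)/(2n+d)$. I want this segment to cut out a single irreducible factor of $h_{n,m}(x)$ over $\QQ_2$; by the standard theory of Newton polygons this occurs exactly when the slope, written in lowest terms, has denominator equal to the horizontal length $2n+d$, that is, when $\gcd(v_2(m),2n+d)=1$.

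This coprimality is precisely where the hypothesis that $v_2(m)$ is odd is used. Since $v_2(m)$ is odd, $2$ is invertible modulo $v_2(m)$, so $n\mapsto 2n+d$ is a bijection on $\ZZ/v_2(m)\ZZ$; fixing a residue class of $n$ with $2n+d\equiv 1\pmod{v_2(m)}$ produces infinitely many $n$ (with $n\gg 0$) for which $\gcd(v_2(m),2n+d)=1$. For each such $n$, the final segment has no interior lattice point, so it corresponds to an irreducible factor of $h_{n,m}(x)$ over $\QQ_2$ of degree $2n+d$.

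For these $n$ the irreducible factor of $h_{n,m}(x)$ over $\QQ$ containing this $\QQ_2$-factor has degree at least $2n+d$, hence codegree at most $(2n+k(m))-(2n+d)=k(m)-d$, which is bounded in terms of $m$ alone. Having exhibited, for infinitely many $n$, a monic irreducible factor of bounded codegree, I would finish by combining Lemma~\ref{lem:poly factor of bounded codegree} with Lemma~\ref{lem:hn roots interval} exactly as in the case $v_2(m)=1$, the auxiliary hypothesis of Lemma~\ref{lem:poly factor of bounded codegree} being verified as there.

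The crux is thus the coprimality condition $\gcd(v_2(m),2n+d)=1$. Because $2n+d$ is pinned to a single residue modulo $2$ as $n$ varies, its greatest common divisor with $v_2(m)$ can be forced to equal $1$ for infinitely many $n$ precisely when $v_2(m)$ is odd; when $v_2(m)$ is even a shared factor of $2$ can obstruct this (for instance when $d$ is even), so that case falls outside the present argument and must be handled separately.
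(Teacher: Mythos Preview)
Your proof is correct and follows essentially the same approach as the paper: invoke Lemma~\ref{lem:2-adic NP} for the Newton polygon, use the oddness of $v_2(m)$ to select an arithmetic progression of $n$ with $\gcd(v_2(m),2n+d)=1$ so that the final segment yields a single irreducible $\QQ_2$-factor, and conclude via Lemma~\ref{lem:poly factor of bounded codegree} and Lemma~\ref{lem:hn roots interval}. Your version simply spells out the coprimality step and the codegree bound more explicitly than the paper does.
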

\begin{proof}
Define $d$ as in Lemma~\ref{lem:2-adic NP}.
Since $v_2(m)$ is odd, by restricting $n$ to a suitable arithmetic progression we can ensure that
$\gcd(v_2(m), 2n+d) = 1$; then the final segment of the 2-adic Newton polygon of $h_{n,m}(x)$ corresponds to an irreducible factor of $h_{n,m}(x)$ over $\QQ_2$.
For such $n$, $h_{n,m}(x)$ has an irreducible factor over $\QQ$ of codegree bounded by a function of $m$;
we may thus combine Lemma~\ref{lem:poly factor of bounded codegree} and Lemma~\ref{lem:hn roots interval} to conclude.
\end{proof}

To handle the case where $v_2(m)$ is even, it is convenient to separate off the case $v_2(m) = 2$, which we can handle in a similar manner.

\begin{lemma}  \label{lem:polynomials with roots in interval valutation 2}
Theorem~\ref{thm:AV by order} holds when $v_2(m) = 2$.
\end{lemma}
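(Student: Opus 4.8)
The plan is to push the Newton polygon analysis of Lemma~\ref{lem:polynomials with roots in interval odd 2-adic valuation} one level further. By Lemma~\ref{lem:2-adic NP}, for $n \gg 0$ the final segment of the $2$-adic Newton polygon of $h_{n,m}(x)$ runs from $(k(m)-d,0)$ to $(2n+k(m),2)$, so it has horizontal length $2n+d$ and rise $v_2(m)=2$. If $d$ is odd then $2n+d$ is odd, so $\gcd(2,2n+d)=1$ and the slope $2/(2n+d)$ is already in lowest terms with denominator equal to the full horizontal length; exactly as in Lemma~\ref{lem:polynomials with roots in interval odd 2-adic valuation}, this segment then corresponds to a single irreducible factor of $h_{n,m}(x)$ over $\QQ_2$ of degree $2n+d$, and I would finish with Lemma~\ref{lem:poly factor of bounded codegree} and Lemma~\ref{lem:hn roots interval}. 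So assume from now on that $d$ is even and set $e\colonequals n+d/2$. The segment now has slope $1/e$, and the factor $F(x)$ of $h_{n,m}(x)$ over $\QQ_2$ that it cuts out has degree $2e$ with all roots $\beta$ of valuation $v_2(\beta)=1/e$; a priori $F$ could be irreducible (good, codegree $\le k(m)-d$) or split into two factors of degree $e$ (bad, codegree $\sim n$), so the task is to rule out the latter for infinitely many $n$.

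First I would pin down the residual polynomial of this segment. Since $v_2(m)=2$ forces $a_0=a_1=0$ and $a_2=\pm1$ in the nonadjacent representation, \eqref{eq:gnk mod 8 congruence} collapses to
\[
h_{n,m}(x)\equiv 4(-1)^k a_2\sum_{i=0}^{n-1}x^{2i}\pmod{(x^{2n},8)},
\]
so every coefficient $c_j$ of $x^j$ with $0<j<2n$ has $v_2(c_j)\ge 2$. Reading off the lattice points $(0,2),(e,1),(2e,0)$ of the segment, the residual polynomial is therefore $T^2+\overline{(c_e/2)}\,T+\overline{(c_0/4)}=T^2+1=(T+1)^2$ over $\FF_2$: this is the wildly ramified case, and a first-order Newton polygon cannot decide the splitting of $F$.

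The heart of the argument is a second-order (Ore--Montes) analysis with respect to $\phi(x)\colonequals x^e-2$, which amounts to estimating $v_2(\beta^e-2)$ through the relation $Q_2(\beta)(\beta^e-2)^2+Q_1(\beta)(\beta^e-2)+Q_0(\beta)=0$, where $h_{n,m}=Q_2\phi^2+Q_1\phi+Q_0$ with $\deg Q_i<e$. Since $3e>\deg h_{n,m}$ for $n\gg0$, the constant term is $Q_0(0)=c_0+2c_e+4c_{2e}$, and the decisive cancellation is that $c_0=(-1)^km\equiv 4\pmod 8$ (as $m=4m'$ with $m'$ odd and $4\equiv-4\pmod 8$) while $c_{2e}=c_{2n+d}$ is odd by \eqref{eq:hn mod 2 leading term}, so
\[
c_0+4c_{2e}\equiv 4+4\equiv 0\pmod 8.
\]
Combined with $v_2(2c_e)\ge 4$ (the odd-index coefficients carry more $2$'s), this forces $v_2(Q_0(0))\ge 3$ and, after one more step of $2$-adic bookkeeping, $v_2(\beta^e-2)\in\tfrac12+\ZZ$, so that $\QQ_2(\beta^e)/\QQ_2$ is ramified of even degree. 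I would then restrict $n$ to the arithmetic progression on which $e=n+d/2$ is odd: now $v_2(\beta)=1/e$ gives $e\mid e(\QQ_2(\beta)/\QQ_2)$ while the half-integer $v_2(\beta^e-2)$ gives $2\mid e(\QQ_2(\beta)/\QQ_2)$, and since $\gcd(e,2)=1$ these combine to $e(\QQ_2(\beta)/\QQ_2)=2e=\deg F$, whence $F$ is irreducible over $\QQ_2$.

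With $F$ irreducible of degree $2n+d$, the $\QQ$-irreducible factor of $h_{n,m}(x)$ containing it has codegree at most $k(m)-d$, a bound depending only on $m$; Lemma~\ref{lem:hn roots interval} places all its roots in $[a,b]$, and Lemma~\ref{lem:poly factor of bounded codegree} then yields infinitely many simple abelian varieties over $\FF_2$ of order $m$. The main obstacle is precisely the second-order step: verifying that $v_2(\beta^e-2)$ has denominator exactly $2$ (equivalently that $v_2(Q_0(0))$ is odd) is where the wild ramification at $p=2$ bites, and making it rigorous is likely to require tracking $c_0$ and $c_{2n+d}$ to higher $2$-adic precision than the mod-$8$ congruences supply — possibly imposing one further congruence on $n$, which nonetheless still leaves an infinite arithmetic progression of good values.
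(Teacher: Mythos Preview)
Your approach via a second Newton polygon is the same idea the paper uses, but the decisive step you flag as ``one more step of $2$-adic bookkeeping'' is a genuine gap, not mere bookkeeping. You correctly obtain residual polynomial $(T+1)^2$ and, with $\phi=x^e-2$, the congruence $v_2(Q_0(0))\ge 3$ for $e$ odd. But to get $v_2(\beta^e-2)\in\tfrac12+\ZZ$ you need the relevant second-order segment to have \emph{odd} height, i.e.\ essentially $v_2(Q_0(0))=3$; your mod-$8$ computation cannot decide this, since modulo $16$ one has $Q_0(0)\equiv c_0+4c_{2e}$, and whether this is $8$ or $0$ depends on $c_0=(-1)^k m\pmod{16}$ and on $c_{2n+d}\pmod 4$, neither of which \eqref{eq:hn mod 8 congruence} determines. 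Moreover, your structural commitment to $e$ odd (needed for the $\gcd(e,2)=1$ step) may be the wrong parity: for $c_0\equiv -4\pmod{16}$, choosing $e$ odd actually \emph{satisfies} the necessary splitting condition rather than violating it, so your argument cannot conclude in that case without further input.

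The paper's proof extracts exactly the same second-order information but packages it as a direct coefficient criterion rather than through the Montes machinery: if the slope-$1/e$ factor splits into two Eisenstein pieces, then $c_e\equiv 0$ or $4\pmod 8$ according as $c_0\equiv -4$ or $4\pmod{16}$. Since \eqref{eq:hn mod 8 congruence} shows $c_e\pmod 8$ depends only on the parity of $e$, one simply fixes the parity of $n$ (hence of $e$) \emph{depending on $m\pmod{16}$} so as to violate this condition. This replaces your unfinished $\phi$-adic computation with a one-line check and, crucially, selects the correct parity of $e$ in each case rather than always forcing $e$ odd.
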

\begin{proof}
Define $d$ as in Lemma~\ref{lem:2-adic NP}.
The final segment of the 2-adic Newton polygon of $h_{n,m}(x)$ corresponds to either an irreducible factor of $h_{n,m}(x)$ over $\QQ_2$
or a pair of irreducible factors, each of degree $1/(n+d/2)$. 
In the latter case (which only occurs if $d$ is even), the coefficient of $x^{n+d/2}$ must be congruent to 0 or 4 modulo 8 according to whether $(-1)^k m$ is congruent to $-4$ or $4$ modulo $16$;
since \eqref{eq:gnk mod 2 congruence}  and \eqref{eq:gnk mod 8 congruence} together imply
\begin{equation} \label{eq:hn mod 8 congruence}
h_{n,m}(x) \equiv 4 (x^{2n} + \cdots + x^2+1) \pmod{(x^{2n}, 8)},
\end{equation}
this case can be ruled out by fixing the parity of $n$ appropriately. For such $n$, $h_{n,m}(x)$ has an irreducible factor over $\QQ$ of codegree bounded by a function of $m$.
We may thus combine Lemma~\ref{lem:poly factor of bounded codegree} and Lemma~\ref{lem:hn roots interval} to conclude.
\end{proof}

To handle higher values of $v_2(m)$, we modify the polynomial $h_{n,m}(x)$ so that we can better emulate the case $v_2(m) = 1$.
\begin{lemma}  \label{lem:polynomials with roots in interval even 2-adic valuation}
Theorem~\ref{thm:AV by order} holds when $v_2(m) \geq 4$.
\end{lemma}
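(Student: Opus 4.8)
The plan is to adapt the Newton polygon argument used for $v_2(m)=1$ (Lemma~\ref{lem:polynomials with roots in interval even}) to the present range. Since odd $v_2(m)$ is already covered by Lemma~\ref{lem:polynomials with roots in interval odd 2-adic valuation}, I treat $v_2(m)$ even and write $v \colonequals v_2(m)$, $v = 2^a w$ with $w$ odd and $a \geq 2$. The Newton polygon computation of Lemma~\ref{lem:2-adic NP} applies to $P_n(x) = \sum_i a_i g_{n,i}(x)$ for \emph{any} admissible integer sequence, since its proof uses only \eqref{eq:gnk mod 2 leading term} and \eqref{eq:gnk mod 2 congruence}; thus the final segment of the $2$-adic Newton polygon runs from $(k-d,0)$ to $(2n+k,v)$ and has horizontal length $2n+d$. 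A single segment of slope $v/(2n+d)$ in lowest terms corresponds to a single irreducible factor over $\QQ_2$ precisely when $\gcd(v,2n+d)=1$. As $v$ is even, this forces $2n+d$ to be odd, i.e.\ $d$ odd, which the nonadjacent representation does not guarantee.

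The crux is to modify the representation so that $d$ becomes odd while preserving the three features driving the argument. Recall that $d$ equals the multiplicity of $z=1$ as a root of $\overline{Q}(z)\in\FF_2[z]$, where $Q(z)=\sum_i a_i z^i$. I would replace $Q(z)$ by $\widetilde Q(z)\colonequals (z-1)Q(z)$. This preserves what is needed: $\widetilde Q$ is monic of degree $k+1$; $\widetilde Q(2)=Q(2)=\pm m$, so the associated $P_n$ still has constant term of valuation $v$; and the roots of $\widetilde Q$ are those of $Q$ together with $z=1$, all in the open disc $|z|<\sqrt{2}$ (using \eqref{eq:bound from nonadjacent} for $Q$ and $|1|<\sqrt{2}$), so the hypotheses of Lemma~\ref{lem:hn roots interval general} persist. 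Over $\FF_2$ we have $\overline{\widetilde Q}=(z+1)\,\overline{Q}$, so the multiplicity of $z=1$ increases by exactly $1$; performing this modification if and only if the nonadjacent $d$ is even yields a representation with $d$ odd. One must also check that the smallest index with $a_i\neq 0$ remains $v$: since the nonadjacent digits satisfy $a_0=\cdots=a_{v-1}=0$, the lowest-degree term of $\widetilde Q$ is a nonzero multiple of $z^v$, which is exactly what \eqref{eq:gnk mod 2 congruence} requires to place the right endpoint of the Newton polygon at height $v$.

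With $d$ odd, $2n+d$ is odd for all $n$, so $\gcd(2^a,2n+d)=1$ automatically. To also remove the odd part $w$, I would restrict $n$ to an arithmetic progression: for each prime $p\mid w$ the congruence $2n+d\equiv 0\pmod{p}$ excludes a single residue class of $n$ (as $2$ is invertible mod $p$), so by the Chinese remainder theorem the set of $n$ with $\gcd(w,2n+d)=1$ contains a full residue class modulo $w$, hence infinitely many $n\gg 0$. For such $n$ we have $\gcd(v,2n+d)=1$, so the final segment is the Newton polygon of a single irreducible factor of $P_n(x)$ over $\QQ_2$ of degree $2n+d$; consequently $P_n(x)$ has an irreducible factor over $\QQ$ of codegree at most $k+1$, a bound depending only on $m$. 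Since the coefficients of $\widetilde Q$ are integers with $\sum_i a_i 2^i=\pm m$ and satisfy the hypotheses of Lemma~\ref{lem:hn roots interval general}, I conclude by applying Lemma~\ref{lem:poly factor of bounded codegree}.

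The main obstacle is the middle step: locating a modification that toggles the parity of $d$ without disturbing the value $\pm m$, the root condition, or the lowest-order vanishing that fixes the height of the right endpoint at $v_2(m)$. Multiplication by $(z-1)$ threads all three needles simultaneously, after which the argument reduces to the coprimality reasoning of Lemma~\ref{lem:polynomials with roots in interval odd 2-adic valuation}.
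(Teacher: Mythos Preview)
Your approach differs from the paper's and the core idea is sound. Rather than trying to force $\gcd(v,2n+d)=1$, the paper replaces the four lowest digits (which vanish since $v\geq 4$) by $(a'_0,a'_1,a'_2,a'_3)=(2,1,0,0)$; this inserts an extra Newton-polygon vertex at $(2n+k-1,1)$, so the long middle segment has slope $1/(2n+d-1)$ and is automatically irreducible over $\QQ_2$ for every $n\gg 0$, with no arithmetic-progression restriction needed. Your device of multiplying the representation by $(z-1)$ to toggle the parity of $d$ is a clean alternative, and it does preserve monicity, the root bound, and the vanishing of the coefficients of index below $v$.

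There is, however, a real gap at the last step. If you form $P_n(x)=\sum_i \tilde a_i\, g_{n,i}(x)$ from the coefficients $\tilde a_i$ of $\widetilde Q$, then by \eqref{eq:gn mod x2 congruence}
\[
P_n(0)=\sum_i \tilde a_i(-2)^i=\widetilde Q(-2)=-3\,Q(-2),
\]
which is generally \emph{not} $\pm m$; for instance $m=112$ gives $Q(z)=z^7-z^4$ and $\widetilde Q(-2)=432$. What the proof of Lemma~\ref{lem:poly factor of bounded codegree} actually uses is $P_n(0)=\pm m$, so that the large irreducible factor inherits constant term $\pm m$; checking $\widetilde Q(2)=m$ does not suffice. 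The repair is immediate: build $P_n$ from the sign-twisted coefficients $(-1)^{i+k'}\tilde a_i$, exactly as in the definition of $h_{n,m}$. Then $P_n(0)=(-1)^{k'}\widetilde Q(2)=\pm m$, the associated polynomial $\sum_i(-1)^{i+k'}\tilde a_i z^i=(-1)^{k'}\widetilde Q(-z)$ still has all roots in $|z|<\sqrt 2$, and the mod-$2$ Newton-polygon computation is unchanged since the twist is invisible modulo $2$.

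One minor slip: writing $v=2^a w$ with $w$ odd does not yield $a\geq 2$ in general (e.g.\ $v=6$); but you only use $a\geq 1$, which follows from $v$ being even.
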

\begin{proof}
Since $a_0 = a_1 = a_2 = a_3 = 0$, the sequence
\[
(a'_0, \dots, a'_k) = (2,1,0,0,a_4,\dots,a_k)
\]
satisfies 
\[
\sum_{i=0}^k |a'_i| 2^{(i-k)/2} \leq 2\cdot 2^{-k/2} + 2^{(1-k)/2} + 1 - 2^{(4-k)/2} < 1;
\]
hence the polynomial
\[
h'_{n,m}(x) = h_{n,m}(x) + (-1)^k (2g_{n,0}(x) + g_{n,1}(x))
\]
satisfies the hypothesis of Lemma~\ref{lem:hn roots interval general}. We may again compute its 2-adic Newton polygon using \eqref{eq:gnk mod 2 leading term}, \eqref{eq:gn1 mod 4 congruence}, \eqref{eq:hn mod 2 leading term}, and \eqref{eq:hn mod 2 congruence}:
its vertices are
\[
(0,0), (k-d, 0), (2n+k(m)-1, 1), (2n+k(m), v_2(m))
\]
where $d$ is the order of vanishing of $x+1+\sum_{i=0}^k a_i (x+1)^k$ at $x=0$ over $\FF_2$.
Over $\QQ_2$, the middle segment corresponds to a single irreducible factor of $h_{n,m}(x)$;
we may thus argue as in the proof of Lemma~\ref{lem:polynomials with roots in interval odd 2-adic valuation} to conclude.
\end{proof}

To summarize, by combining Lemma~\ref{lem:polynomials with roots in interval odd 2-adic valuation},
Lemma~\ref{lem:polynomials with roots in interval valutation 2}, 
and Lemma~\ref{lem:polynomials with roots in interval even 2-adic valuation},
we deduce the following.
\begin{lemma} \label{lem:polynomials with roots in interval even case}
Theorem~\ref{thm:AV by order} holds when $m$ is even.
\end{lemma}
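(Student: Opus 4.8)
The plan is to reduce to the three preceding lemmas by a case analysis on the $2$-adic valuation $v_2(m)$. Since $m$ is even we have $v_2(m) \geq 1$, and I would split according to whether $v_2(m)$ is odd, equals $2$, or is at least $4$; these three cases are settled by Lemma~\ref{lem:polynomials with roots in interval odd 2-adic valuation}, Lemma~\ref{lem:polynomials with roots in interval valutation 2}, and Lemma~\ref{lem:polynomials with roots in interval even 2-adic valuation} respectively. The only point needing verification is that this trichotomy exhausts all possibilities, and this is immediate: the odd valuations $1, 3, 5, \dots$ all fall under the first case, the value $v_2(m) = 2$ under the second, and every even valuation $\geq 4$ under the third (the third case also redundantly covers the odd valuations $\geq 5$, which does no harm). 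Hence every admissible $v_2(m) \geq 1$ is accounted for.

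I would then record why the value $v_2(m) = 2$ resists being merged into the case $v_2(m) \geq 4$, since this explains the shape of the case split. The argument for $v_2(m) \geq 4$ exploits the fact that such $m$ has $a_0 = a_1 = a_2 = a_3 = 0$ in its nonadjacent representation, which leaves enough slack to add the correction term $(-1)^k(2 g_{n,0}(x) + g_{n,1}(x))$ while still satisfying the weight condition \eqref{eq:weight condition}. When $v_2(m) = 2$ one instead has $a_2 \neq 0$, so this modification is unavailable; there one argues directly, using the congruence \eqref{eq:hn mod 8 congruence} to fix the parity of $n$ and thereby rule out an unwanted degree-$2$ splitting of the last $2$-adic Newton polygon segment.

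Because all three constituent lemmas may be invoked, the present statement carries no genuine difficulty of its own. The main obstacle, such as it is, is purely bookkeeping: confirming that the partition into $v_2(m)$ odd, $v_2(m) = 2$, and $v_2(m) \geq 4$ leaves no gap among the admissible valuations. The substantive Newton polygon computations, together with the verification that each final segment yields an irreducible factor of bounded codegree to which Lemma~\ref{lem:poly factor of bounded codegree} applies, all reside in the earlier lemmas.
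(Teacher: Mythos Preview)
Your proposal is correct and matches the paper's approach exactly: the paper's proof is a one-line summary combining Lemma~\ref{lem:polynomials with roots in interval odd 2-adic valuation}, Lemma~\ref{lem:polynomials with roots in interval valutation 2}, and Lemma~\ref{lem:polynomials with roots in interval even 2-adic valuation}, which is precisely your case split on $v_2(m)$. Your additional commentary on why $v_2(m)=2$ cannot be absorbed into the $v_2(m)\geq 4$ argument is accurate and a welcome elaboration, though not strictly required for the proof.
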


\section{2-adic congruences: odd order case}

In this section, we prove Theorem~\ref{thm:AV by order} for $m$ odd.
For this, we cannot use the 2-adic Newton polygon of $h_{n,m}(x)$ because it has all slopes equal to 0; instead, we use the 2-adic Newton polygon of $h_{n,m}(x+1)$.
To begin with, note that for $m$ odd, by \eqref{eq:value at 1} we have
\begin{equation} \label{eq:hnm at 1 when m odd}
h_{n,m}(1) \equiv 2 \pmod{4};
\end{equation}
more precisely, we are using here the fact that $a_0$ is odd, $a_1$ is even, and $g_{n,k}(1) \equiv 0 \pmod{4}$ for $k \geq 2$.

To illustrate the method, we first prove some isolated cases of Theorem~\ref{thm:AV by order}.
\begin{lemma} \label{lem:additional1}
Theorem~\ref{thm:AV by order} holds for any odd $m$ such that $k(m)$ is even and
\[
\sum_{i=0}^{k(m)} a_i x^i \equiv (x+1)^{k(m)} \pmod{2}.
\]
For example, this holds for $m = 15, 45, 51, 75, 77, 85$.
\end{lemma}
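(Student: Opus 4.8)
The plan is to show that, for the special $m$ in question, the shifted polynomial $h_{n,m}(x+1)$ becomes Eisenstein at $2$ for infinitely many $n$, so that $h_{n,m}(x)$ is itself irreducible over $\QQ$ and Lemma~\ref{lem:poly factor of right norm} applies with the full polynomial as its own irreducible factor. First I would reduce $h_{n,m}(x)$ modulo $2$. By \eqref{eq:hn mod 2 leading term} we have $h_{n,m}(x) \equiv x^{2n}\sum_{i=0}^{k} a_i(x+1)^i + m \pmod 2$, where $k = k(m)$. Substituting $x \mapsto x+1$ in the hypothesis $\sum_i a_i x^i \equiv (x+1)^{k}\pmod 2$ gives $\sum_i a_i (x+1)^i \equiv (x+2)^{k} \equiv x^{k}\pmod 2$; since $m$ is odd, this collapses the congruence to $h_{n,m}(x)\equiv x^{2n+k}+1\pmod 2$.

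The second step is to shift and exploit the parity of $k$. Reducing $h_{n,m}(x+1)$ modulo $2$ then gives $(x+1)^{2n+k}+1$. Because $k$ is even, the values $2n+k$ include infinitely many powers of $2$: writing $k = 2\ell$, the choice $n = 2^{N-1}-\ell$ gives $2n+k = 2^N$ for every $N$ with $2^{N-1}\ge \ell$. For such $n$ we have $(x+1)^{2^N}+1 \equiv x^{2^N}\pmod 2$ over $\FF_2$, so every non-leading coefficient of $h_{n,m}(x+1)$ is even. Combined with the constant term $h_{n,m}(1)\equiv 2\pmod 4$ from \eqref{eq:hnm at 1 when m odd}, which is not divisible by $4$, this shows $h_{n,m}(x+1)$ is Eisenstein at $2$; hence it is irreducible over $\QQ_2$, and therefore $h_{n,m}(x)$ is irreducible over $\QQ$.

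To conclude, I would record that $h_{n,m}(x)$ is monic of degree $2n+k$ with $h_{n,m}(0) = (-1)^k m = m$ by \eqref{eq:hnm value} (using $k$ even) and that its roots lie in $[a,b]$ by Lemma~\ref{lem:hn roots interval}. Thus for the infinitely many $n$ produced above, $h_{n,m}(x)$ is an irreducible factor of itself with constant term $m$, and applying Lemma~\ref{lem:poly factor of right norm} (to the sign-twisted sequence $(-1)^{i+k}a_i$, whose associated polynomial still has all complex roots in $|z|\le\sqrt 2$ by \eqref{eq:bound from nonadjacent}) yields infinitely many simple abelian varieties over $\FF_2$ of order $m$. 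The stated examples are then verified by computing the nonadjacent representation of each $m$, confirming that $k(m)$ is even, and checking the congruence $\sum_i a_i x^i\equiv (x+1)^{k(m)}\pmod 2$.

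The main difficulty here is really a matter of recognizing the right move rather than overcoming a technical obstacle: since the $2$-adic Newton polygon of $h_{n,m}(x)$ is flat in the odd-order case, one must pass to $h_{n,m}(x+1)$ and read off an Eisenstein structure instead. The hypothesis on $\sum_i a_i x^i$ is precisely the condition forcing $h_{n,m}(x+1)$ to reduce to $x^{2n+k}$ modulo $2$ once $2n+k$ is a power of $2$, and the evenness of $k(m)$ is exactly what guarantees that infinitely many such $n$ exist.
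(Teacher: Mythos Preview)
Your proof is correct and follows essentially the same approach as the paper: reduce $h_{n,m}(x)$ modulo $2$ using the hypothesis, choose $n$ so that $2n+k$ is a power of $2$, and deduce that $h_{n,m}(x+1)$ is Eisenstein at $2$ via \eqref{eq:hnm at 1 when m odd}. The only cosmetic difference is that the paper concludes directly from Lemma~\ref{lem:AV from poly} (the irreducible $h_{n,m}$ have distinct degrees, hence are distinct), whereas you route through Lemma~\ref{lem:poly factor of right norm}; note that the latter assumes $m>1$, so for completeness you may want to exclude $m=1$ (which formally satisfies the hypothesis with $k(1)=0$) or simply invoke Lemma~\ref{lem:AV from poly} directly as the paper does.
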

\begin{proof}
For $n = 2^j-k(m)/2$, we have from \eqref{eq:hn mod 2 leading term} that
\[
h_{n,m}(x) \equiv (x+1)^{2^{j+1}} \pmod{2}.
\]
By Lemma~\ref{lem:2-adic congruence at shift} and \eqref{eq:hnm at 1 when m odd}, the Newton polygon of $h_{n,m}(x+1)$  has vertices
\[
(0,0), (2^{j+1}, 1);
\]
that is, $h_{n,m}(x+1)$ satisfies the Sch\"onemann--Eisenstein irreducibility criterion at 2.
We may thus combine Lemma~\ref{lem:AV from poly} and Lemma~\ref{lem:hn roots interval} to conclude.
\end{proof}

To cover the remaining values of $m$, we use a variant construction that preserves the key features of this method.
We say that a monic integer polynomial $Q(z)$ is a \emph{compliant representation} of the odd positive integer $m$ if
$Q(2) = m$, $Q(z) \equiv (z-1)^{\deg Q(z)} \pmod{2}$, and $Q(z)$ has all complex roots in the disc $|z| < \sqrt{2}$.

\begin{lemma} \label{lemma:mod 2 criterion}
Theorem~\ref{thm:AV by order} holds for $m$ admitting a compliant representation.
\end{lemma}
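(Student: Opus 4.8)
The plan is to follow the proof of Lemma~\ref{lem:additional1} almost verbatim, with the nonadjacent representation replaced by a compliant representation $Q(z) = \sum_{i=0}^k a_i z^i$ (so $a_k = 1$). First I would set $P_n(x) \colonequals \sum_{i=0}^k (-1)^{i+k} a_i g_{n,i}(x)$, which by Lemma~\ref{lem:gnk polynomial} is monic of degree $2n+k$. The associated coefficient polynomial is $\sum_i (-1)^{i+k} a_i z^i = (-1)^k Q(-z)$, which is monic and has all of its complex roots in the open disc $|z| < \sqrt 2$ (they are the negatives of the roots of $Q$); hence Lemma~\ref{lem:hn roots interval general} applies in its strict form and the roots of $P_n(x)$ are real, pairwise distinct, and contained in $[a,b]$. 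Evaluating the constant term with \eqref{eq:gn mod x2 congruence} gives $P_n(0) = \sum_i (-1)^{i+k} a_i (-2)^i = (-1)^k Q(2) = (-1)^k m$.

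For irreducibility I would examine the $2$-adic Newton polygon of the shift $P_n(x+1)$, as in Lemma~\ref{lem:additional1}. Reducing modulo $2$ with \eqref{eq:gnk mod 2 leading term} yields $P_n(x) \equiv x^{2n} Q(x+1) + Q(2) \pmod 2$, and the compliance congruence $Q(z) \equiv (z-1)^{k} \pmod 2$ gives $Q(x+1) \equiv x^{k} \pmod 2$; since $m = Q(2)$ is odd this collapses to $P_n(x) \equiv x^{2n+k} + 1 \pmod 2$, so that $P_n(x+1) \equiv (x+1)^{2n+k} + 1 \pmod 2$. I would then restrict $n$ to those values for which $2n + k$ is a power of $2$, say $2n+k = 2^s$; for such $n$ the right-hand side becomes $x^{2^s}$, so every non-leading coefficient of $P_n(x+1)$ is even. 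Together with $P_n(1) \equiv 2 \pmod 4$ this is exactly the Sch\"onemann--Eisenstein criterion at $2$, whence $P_n(x+1)$, and therefore $P_n(x)$, is irreducible over $\QQ$. The congruence $P_n(1) \equiv 2 \pmod 4$ I would extract from \eqref{eq:value at 1}: only the $i = 0$ and $i = 1$ terms survive modulo $4$, and compliance forces $a_0 \equiv 1$ and (once $k$ is even) $a_1 \equiv 0 \pmod 2$, so that $P_n(1) \equiv \pm 2(a_0 + a_1) \equiv 2 \pmod 4$.

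Finally, each such $P_n(x)$ is a monic irreducible integer polynomial with all roots in $[a,b]$ and constant term $(-1)^k m$, so Lemma~\ref{lem:AV from poly} yields a simple abelian variety over $\FF_2$ of order $(-1)^{2^s}(-1)^k m = m$; as $s$ ranges over large integers these have strictly increasing dimension $2^s$ and are pairwise non-isogenous, giving infinitely many. I expect the only real subtlety inside this lemma to be the requirement that $2n+k$ be a power of $2$: this forces $\deg Q = k$ to be even, which is in turn precisely the condition making $a_1$ even and hence $P_n(1) \equiv 2 \pmod 4$, so the three constraints are consistent rather than conflicting. The genuinely hard part of the overall argument is not here but upstream, namely producing, for every odd $m$, a compliant representation of even degree; granting such a representation, the Newton-polygon computation above is essentially forced, the sole nontrivial check being the mod-$4$ value of $P_n(1)$.
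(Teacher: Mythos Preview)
Your proof is correct and follows essentially the same approach as the paper: form $P_n(x) = \sum_i (-1)^{i+k} a_i g_{n,i}(x)$, check via \eqref{eq:gnk mod 2 leading term} that compliance forces $P_n(x) \equiv x^{2n+k}+1 \pmod 2$, choose $2n+k$ a power of $2$, and combine with $P_n(1)\equiv 2 \pmod 4$ from \eqref{eq:value at 1} to get Eisenstein at $2$.

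The one small point you defer that the paper handles in-line is the parity of $k$. You correctly observe that the argument needs $k$ even (both so that $2n+k$ can be a power of $2$ and so that $a_1$ is even), but you push this requirement ``upstream'' to the construction of compliant representations. The paper instead notes that if $Q(z)$ is compliant of odd degree then $(z-1)Q(z)$ is again compliant (it evaluates to $1\cdot m = m$ at $z=2$, is congruent to $(z-1)^{k+1}$ mod $2$, and the extra root $z=1$ lies in the open disc), so one may always assume $k$ even without strengthening the hypothesis of the lemma. With that one-line observation added, your argument proves the lemma exactly as stated.
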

\begin{proof}
Let $Q(z)$ be a compliant representation of $m$; by multiplying by $z-1$ as needed, we may ensure that $k \colonequals \deg Q(z)$ is even.
Write $Q(z) = \sum_{i=0}^k c_i z^i$.
For each $n$, the polynomial $P_n(x) = \sum_i (-1)^{i+k} c_i g_{n,i}(x)$ satisfies $P_n(0) = (-1)^{k} m$.
By Lemma~\ref{lem:hn roots interval general}, $P_n(x)$ has all roots in $[a,b]$.
By \eqref{eq:value at 1} (as in the proof of  \eqref{eq:hnm at 1 when m odd}), $P_n(1) \equiv 2 \pmod{4}$.
For $n = 2^j-k/2$, we see from the proof of Lemma~\ref{lem:additional1} that $P_n(x)$ is Eisenstein at 2 and hence irreducible.
We may thus directly apply Lemma~\ref{lem:AV from poly} to conclude.
\end{proof}

In order to produce compliant representations, it will be convenient to further quantify the condition on the roots. To this end, for $Q(z)$ a compliant representation of some integer $m$, define the \emph{quality} of $Q(z)$ as
\begin{equation} \label{eq:quality}
\qual(Q(z)) \colonequals \min\{|Q(z)|: |z| = \sqrt{2}\} =  \min\{|Q(z)|: |z| \geq \sqrt{2}\}.
\end{equation}
Keep in mind that the last equality in \eqref{eq:quality} is a consequence of the maximum modulus principle, and is only valid under the assumption that $Q(z)$ is compliant.

\begin{lemma} \label{lem:compliant representation}
Let $m$ be a positive odd integer.
\begin{enumerate}
\item[(a)] If $m \leq 3094$, then $m$ admits a compliant representation.
\item[(b)] If $3094 \leq m \leq 50000$, then $m$ admits a compliant representation of quality at least $7$.
\end{enumerate}
\end{lemma}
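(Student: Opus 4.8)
The plan is to prove both parts by an exhaustive but finite computer search, realized in the accompanying \SageMath{} code; what follows describes how I would organize that search and, more importantly, how its output can be certified rigorously. First I would reduce the existence of a compliant representation of a fixed odd $m$ to a finite enumeration. For a candidate degree $k$, the congruence $Q(z) \equiv (z-1)^k \pmod 2$ fixes the parity of every coefficient, so I would write $Q(z) = (z-1)^k + 2S(z)$ with $S \in \ZZ[x]$ of degree $< k$; then the normalization $Q(2) = m$ becomes the single linear condition $S(2) = (m-1)/2$, which is an integer precisely because $m$ is odd. Writing $Q(z) = \prod_j (z-r_j)$ with $|r_j| < \sqrt 2$, the elementary symmetric functions give the coefficient bound $|c_i| \leq \binom{k}{i} 2^{(k-i)/2}$, so for each $k$ the coefficient vector ranges over an explicit finite box, and the linear constraint cuts this down further. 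The same factorization yields $m = \prod_j |2 - r_j| < (2+\sqrt 2)^k$, which bounds $k$ from below by $\log m / \log(2+\sqrt 2)$; I would search candidate degrees upward from this bound. Note that unlike the simple weight condition \eqref{eq:weight condition} used for $m$ even, here we want the roots in the \emph{open} disc and we allow representations that the crude triangle-inequality bound would not certify, which is exactly why a direct root computation is needed.

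Next, for each candidate $Q$ I would certify the root and quality conditions exactly, rather than by floating-point root-finding. Compliance itself (all roots in $|z| < \sqrt 2$) can be verified by a Schur--Cohn-type criterion carried out in $\ZZ[\sqrt 2]$, or, in keeping with the winding-number approach of Section~\ref{lem:hn roots interval general}, by checking that the winding number of $\theta \mapsto Q(\sqrt 2\, e^{i\theta})$ equals $k$ together with the absence of zeros on the circle. For the quality, I would expand $|Q(z)|^2 = Q(z)\overline{Q(z)}$ on $|z| = \sqrt 2$ as a real polynomial with dyadic coefficients in $t = 2\cos\theta$; then $\qual(Q)^2 = \min_{|z|=\sqrt 2}|Q(z)|^2$, as in \eqref{eq:quality}, is the minimum of this one-variable polynomial over $t \in [-2,2]$, which can be bounded below rigorously by isolating the real roots of its derivative with Sturm sequences (or by validated interval arithmetic). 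Certifying that this minimum is strictly positive reproves compliance, and for part (b) I would instead demand the certified lower bound to exceed $49$, giving $\qual(Q) \geq 7$.

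Finally I would run this search over every odd $m \leq 3094$, recording a compliant $Q$ in each case, and over every odd $m$ with $3094 \leq m \leq 50000$, recording a compliant $Q$ of certified quality at least $7$. Since both ranges are finite and explicit, the lemma is proved by exhibiting this finite table of certified witnesses; the verification of each witness is a decidable exact computation, so no floating-point heuristic enters the logical chain.

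The step I expect to be the main obstacle is not the enumeration itself, which is small (the relevant degrees sit in the single-digit-to-low-teens range for $m \leq 50000$), but the rigorous certification of the strict inequalities $\qual(Q) > 0$ and $\qual(Q) \geq 7$ uniformly across the thousands of polynomials involved: a numerical minimum of $|Q(\sqrt 2\, e^{i\theta})|^2$ is not a proof, so each such minimization must be reduced to a certified statement about a real polynomial via Sturm sequences, resultants, or validated arithmetic. A secondary subtlety is that existence alone imposes no upper bound on the degree of a compliant representation, since multiplying any compliant $Q(z)$ by $z-1$ preserves both $Q(2)$ and the congruence mod $2$ while raising the degree (though it may lower the quality); consequently termination of the degree search is guaranteed only a posteriori, by the recorded witnesses, and not by any a priori degree bound.
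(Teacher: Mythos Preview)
Your certification strategy---reducing $|Q(\sqrt{2}e^{i\theta})|^2$ to a univariate real polynomial and minimizing it exactly---is essentially what the paper does, so that part is fine. The genuine problem is the enumeration step, which you drastically underestimate. You propose, for each odd $m\leq 50000$, to search the box $|c_i|\leq \binom{k}{i}2^{(k-i)/2}$ at degree $k\approx 9$--$13$. But already at $k=9$ the middle coefficients can range over several hundred integers each, and even after imposing the parity condition and the single linear constraint $Q(2)=m$ you are left with an eight-dimensional lattice box containing well over $10^{12}$ points; repeating this for some 25000 values of $m$ is hopeless. The degrees are indeed small, but the number of root-bounded polynomials of a given degree is not.

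The paper circumvents this with a multiplicative bootstrap that you are missing. It enumerates once over a tiny set (monic polynomials of degree $\leq 7$ with all coefficients in $\{-3,\dots,3\}$ of the right parity, roughly $2\times 10^4$ candidates total), recording for a few hundred small odd values of $m$ a compliant representation together with its quality. Then for larger $m$ it picks a smaller odd $m_1$, takes $m_2$ to be an odd integer nearest $m/m_1$, and sets $R(z)=Q_{m_1}(z)Q_{m_2}(z)+c$ with $c=m-m_1m_2$. This $R$ is automatically $\equiv (z-1)^{\deg R}\pmod 2$ (because $c$ is even) and has quality at least $\qual(Q_{m_1})\qual(Q_{m_2})-|c|$, so compliance and a quality lower bound are inherited for free without any new root computation. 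Propagating these bounds up to $50000$ takes minutes. Without this recursive construction---or some equally sharp pruning of the coefficient box---your search as written will not terminate in practice, so the proof cannot be completed along the lines you describe.
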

\begin{proof}
We describe a computer-assisted proof; the associated computations run in \SageMath{} (version 9.6) in under 5 minutes on a standard laptop (we used one core on an Intel iCore i5-6200U @2.30GHz). As the \SageMath{} code is quite short, we have included it in its entirety in the appendix.

We first observe that \SageMath{} provides an exact representation of the subfield $\overline{\QQ}$ of $\CC$ based on interval arithmetic. Using this, given a monic integer polynomial $Q(z)$,
we may compute the roots of $Q(z)$ in $\overline{\QQ}$ and then test rigorously whether they all lie in the disc $|z| < \sqrt{2}$. If so, we may then rigorously compute $\qual(Q(z)) \in \overline{\QQ}$ as follows.
Since $Q(z) Q(\overline{z})$ is a symmetric integer polynomial in $z$ and $\overline{z}$, we may rewrite it as an integer polynomial in $z+\overline{z}$ and $z \overline{z}$; specializing these to $t$ and $2$, respectively, yields an integer polynomial $R(t)$ such that
\begin{equation} \label{eq:quality formula}
\qual(Q(z))^2 = \min\{R(t): t \in [-2\sqrt{2}, 2\sqrt{2}]\}.
\end{equation}
The minimum is achieved either at $\pm 2 \sqrt{2}$ or at some zero of $R'(t)$ in $[-2 \sqrt{2}, 2 \sqrt{2}]$.

We now describe the main computation. We first run an exhaust over monic polynomials of degree at most 7 with all coefficients in $\{-3, \dots, 3\}$, checking whether each polynomial is compliant.
(While it would be feasible to perform an exhaustive search for compliant polynomials of degree up to 7 by adapting the search strategy for Weil polynomials described in \cite{kedlaya-search}, we did not need to implement this here.)
In this way we find compliant representations of 167 distinct integers in the range $\{1,\dots,459\}$;
for each of these we record the maximum observed quality, rounded down to the nearest multiple of $1/7$.

We then compute, for each odd integer $m \in \{1,\dots,50000\}$, a lower bound on the maximum quality of a compliant representation of $m$ using the following logic.
Given odd integers $m_1 < m$, let $m_2$ be one of the nearest odd integers to $m/m_1$ and set $c = m - m_1 m_2$. Let $Q_1(z), Q_2(z)$ be compliant representations of $m_1, m_2$ of respective qualities $q_1, q_2$.
If $q_1 q_2 > |c|$, then $R(z) = Q_1(z) Q_2(z) + c$ is a compliant representation of $m$ of quality at least $q_1 q_2 - |c|$. 

From the results of this computation, we read off (a) and (b).
\end{proof}

\begin{lemma} \label{lem:polynomials with roots in interval odd case}
Every positive odd integer admits a compliant representation. Hence by Lemma~\ref{lem:compliant representation}
(or the theorem of Madan--Pal in the case $m=1$), Theorem~\ref{thm:AV by order} holds for $m$ odd.
\end{lemma}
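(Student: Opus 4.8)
The plan is to prove that every positive odd integer $m$ admits a compliant representation by combining the explicit computational base cases of Lemma~\ref{lem:compliant representation} with an inductive multiplication-and-perturbation step that extends the range indefinitely. The key quantitative engine is the quality bound: if $Q_1(z)$ and $Q_2(z)$ are compliant representations of odd integers $m_1,m_2$ with qualities $q_1,q_2$, and $c$ is an integer with $|c| < q_1 q_2$, then $R(z) \colonequals Q_1(z)Q_2(z) + c$ is again a compliant representation, this time of $m_1 m_2 + c$, of quality at least $q_1 q_2 - |c|$. Indeed the mod-2 condition is preserved because adding the even constant $c$ does not disturb $R(z) \equiv (z-1)^{\deg R}\pmod 2$ (note $c$ must be even since $m_1 m_2$ and the target are both odd), and on the circle $|z| = \sqrt{2}$ the reverse triangle inequality gives $|R(z)| \geq |Q_1(z)Q_2(z)| - |c| \geq q_1 q_2 - |c| > 0$, which forces all roots of $R$ into the open disc $|z| < \sqrt{2}$.

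First I would dispose of the range $m \leq 3094$ directly by Lemma~\ref{lem:compliant representation}(a), and record that Lemma~\ref{lem:compliant representation}(b) furnishes, for every odd $m$ with $3094 \leq m \leq 50000$, a compliant representation of quality at least $7$. The substantive content is an induction on $m$ for $m > 50000$: given such an odd $m$, I would choose an odd factor-like integer $m_1$ in the already-covered range (say $m_1$ near $\sqrt{m}$, or more simply an $m_1$ in the guaranteed high-quality window) and let $m_2$ be a nearest odd integer to $m/m_1$, so that the residual $c = m - m_1 m_2$ is an even integer bounded in absolute value by roughly $m_1$ (since consecutive odd multiples of $m_1$ differ by $2m_1$). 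The inductive hypothesis supplies compliant representations of $m_1$ and $m_2$ of controlled quality, and the perturbation lemma then produces a compliant representation of $m$ provided $q_1 q_2 > |c|$.

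The hard part will be propagating a uniform quality lower bound through the induction so that the inequality $q_1 q_2 > |c|$ can be sustained for arbitrarily large $m$. The danger is that each multiplication step could degrade quality faster than it grows, causing the bound to collapse. The resolution is to exploit that quality is \emph{multiplicative} before perturbation ($\qual(Q_1 Q_2) = q_1 q_2$ by the product formula for the minimum modulus on $|z| = \sqrt{2}$) while the error $|c| \lesssim m_1$ stays small relative to $m = m_1 m_2$; choosing $m_1$ from the high-quality window (quality $\geq 7$) and $m_2$ large means $q_1 q_2$ grows at least like $7 q_2$ whereas $|c|$ is comparable only to the size of $m_1$, not $m_2$. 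I would set up the induction so that the representation of $m$ inherits quality at least, say, $7$ as well, verifying that $q_1 q_2 - |c| \geq 7$ holds once $m_2$ is large enough; the threshold $50000$ in Lemma~\ref{lem:compliant representation} is presumably calibrated precisely so that this self-sustaining bound kicks in.

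With every positive odd integer shown to admit a compliant representation, the conclusion follows immediately: for $m > 1$ odd we invoke Lemma~\ref{lemma:mod 2 criterion}, and for the excluded base case $m = 1$ we appeal to the Madan--Pal theorem as noted in the statement. This completes the reduction of Theorem~\ref{thm:AV by order} in the odd case to the existence of compliant representations, which together with Lemma~\ref{lem:polynomials with roots in interval even case} establishes the theorem for all $m$.
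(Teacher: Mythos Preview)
Your proposal correctly identifies the multiplication-and-perturbation mechanism and the role of quality, but the quantitative bookkeeping does not close. If you take $m_1$ in the ``high-quality window'' $[3094,50000]$ (or near $\sqrt{m}$), then $|c| = |m - m_1 m_2|$ can be as large as about $m_1 \geq 3094$, while the only quality bound your induction propagates is $q_1, q_2 \geq 7$, giving $q_1 q_2 \geq 49$. Thus $q_1 q_2 > |c|$ fails immediately, and the self-sustaining inequality $q_1 q_2 - |c| \geq 7$ fails even more badly. The phrase ``$q_1 q_2$ grows at least like $7 q_2$'' does not help, because the induction hypothesis gives no growth of $q_2$ with $m_2$; it only guarantees $q_2 \geq 7$.

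The paper avoids this by choosing a \emph{small fixed} multiplier rather than a large one: it uses the single polynomial $z^4-1$, which represents $15$ compliantly with quality exactly $3$. Multiplying a quality-$7$ representation $Q(z)$ of $m$ by $z^4-1$ gives a quality-$21$ representation of $15m$, and perturbing by an even $c$ with $|c| \leq 14$ covers every odd integer in $[15m-14, 15m+14]$ while preserving quality $\geq 21-14 = 7$. The induction is then framed as: if every odd integer in $\{m, m+2, \dots, 15m-16\}$ has a quality-$7$ compliant representation, the same holds for $\{m+2, \dots, 15(m+2)-16\}$. The base case $m=3095$ is exactly what Lemma~\ref{lem:compliant representation}(b) provides, since $15 \cdot 3095 - 16 < 50000$. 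The key idea you are missing is that the multiplier must be small enough that the perturbation needed to fill gaps (of size $\leq m_1$) is dominated by the quality gain (of size $q_1 \cdot 7$); this forces $m_1$ to be a small explicit constant, not something drawn from the inductive range.
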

\begin{proof}
By Lemma~\ref{lem:compliant representation}(a), it will suffices to check that every odd integer $m \geq 3095$ admits a compliant representation of quality at least 7. We check by induction on $m$ that each of $\{m, m+2, \dots, 15m-16\}$ admits such a representation, this being true for $m = 3095$ by Lemma~\ref{lem:compliant representation}(b) because $15 \cdot 3095 - 16 < 50000$. 

Given the claim for some $m$, let $Q(z)$ be a compliant representation of $m$ of quality at least 7. 
For $c$ even with $|c| \leq 14$,
\[
R(z) = (z^4 - 1)Q(z) + c
\]
is a compliant representation of $15m+c$ of quality at least
\[
\qual(z^4-1)\qual(Q(z)) - |c| \geq 3 \cdot 7 - 14 \geq 7.
\]
It follows that each of $\{m+2, m+4, \dots, 15m + 14 = 15(m+2) - 16\}$ admits a compliant representation of quality at least $7$;
that is, the induction hypothesis holds with $m$ replaced by $m+2$, as desired.
\end{proof}

\vfill

\appendix
\pagebreak
\section{\SageMath{} code for Lemma~\ref{lem:compliant representation}}

This code uses the following features of Python and \SageMath{}:
\begin{itemize}
\item
\verb+all+ returns \verb+True+ iff all of its inputs evaluate to \verb+True+ in a boolean context.
\item
Python indexing starts from 0 rather than 1, so \verb+range(n)+ returns $0,\dots,n-1$ and \verb+range(1,n)+ returns $1,\dots,n-1$.
\item
\verb+AA+ and \verb+QQbar+ are predefined in \SageMath{} as the fields of algebraic real and complex numbers, respectively. Computations in these fields is rigorous, not subject to roundoff errors.
\item
For \verb+f+ a polynomial over a field, \verb+f.roots(K)+ computes its roots in the field \verb+K+ (defaulting to the base field of \verb+f+ if \verb+K+ is omitted). The output consists of pairs $(\alpha, m)$ where $\alpha$ is a root and $m$ is the multiplicity of the root.
\end{itemize}

{
\begin{verbatim}
import itertools
R.<z> = QQ[] # Univariate polynomial ring

# Check that the polynomial f has all complex roots in the disc |z| < sqrt(2). 
def all_roots_in_disc(f):
    return all(abs(i)^2 < 2 for (i,_) in f.roots(QQbar))

# Compute the quality of f (see (8.4)), multiplied by 7 
# and rounded down to the nearest integer.
def quality_lower_bound(f):
    P.<x,y,t> = QQ[]
    I = P.ideal(x+y-t, x*y-2)
    # Compute a representative of f(x)*f(y) modulo I.
    # This corresponds to the polynomial R(t) appearing in the proof of Lemma 8.5.
    g1 = I.reduce(f(x) * f(y))
    # The polynomial g1 is currently a univariate polynomial in t, but in the ring P.
    # We next create g by substituting t -> z to land in the ring R.
    g = R(g1(0, 0, z))
    # Make the list of roots of this polynomial, together with +/- 2*sqrt(2).
    rootlist = (g.derivative()*(z^2 - 8)).roots(AA)
    # Implement (8.7).
    ans = min((g(i) * 49).floor() for (i,_) in rootlist if i^2 <= 8) 
    return floor(sqrt(ans))

# Create a table of compliant representations of small integers.
compliant_reps = {}
rep_quality = {}
for n in range(1, 8): # step through n = 1, ..., 7
    # Iterate over n-tuples in which the i-th term (starting wih i=0) runs over 
    # -3,-1,1,3 if (n choose i) is odd and -2,0,2 otherwise.
    for t in itertools.product(*((range(-3,4,2) if binomial(n,i)%2 
                                  else range(-2,3,2)) for i in range(n))):
        # Convert t to a polynomial, after appending 1 for the leading coefficient.
        u = R(list(t) + [1]) 
        if all_roots_in_disc(u):
            m = u(2)
            q = quality_lower_bound(u)
            if m not in rep_quality or rep_quality[m] < q:
                compliant_reps[m] = u
                rep_quality[m] = q
\end{verbatim}
\pagebreak

\begin{verbatim}
# Compute lower bounds of qualities of compliant representations of larger integers.
# To save time, rather than optimizing fully, we quit as soon as we find a representation
# of quality at least 8.
n = 50000
for m in range(1, n, 2): # step by 2
    for m1 in range(3, ceil(sqrt(m)), 2): # step by 2
        if m1 in rep_quality:
            # Let m2 be one of the nearest odd integers to m/m1.
            tmp = (QQ(m)/m1+1) / 2
            for m2 in [tmp.floor()*2-1, tmp.ceil()*2-1]:
                if m2 < m and m2 in rep_quality:
                    c = m - m1*m2
                    q = (rep_quality[m1]*rep_quality[m2])//7 - abs(c)
                    if m not in rep_quality or rep_quality[m] < q:
                        compliant_reps[m] = compliant_reps[m1]*compliant_reps[m2] + c
                        if q < 56:
                            q = quality_lower_bound(compliant_reps[m])
                        rep_quality[m] = q
            if m in rep_quality and rep_quality[m] >= 56:
                break

# Running these commands without errors confirms the conclusions of Lemma 8.5.
assert all(i in rep_quality for i in range(1, n, 2))
assert all(rep_quality[i] >= 49 for i in range(3095, n, 2))
\end{verbatim}
}

\pagebreak

\end{document}